\newtheorem{thm}{Theorem}[section]
\newtheorem{cor}[thm]{Corollary}
\newtheorem{prop}[thm]{Proposition}
\newtheorem{lem}[thm]{Lemma}
\theoremstyle{definition}
\newtheorem{exmp}[thm]{Example}
\theoremstyle{remark}
\newtheorem{rem}[thm]{Remark}
\let\c@equation\c@thm
\numberwithin{equation}{section}
\begin{document}
	
\author{S.Senthamarai Kannan and Pinakinath Saha}
	
\address{Chennai Mathematical Institute, Plot H1, SIPCOT IT Park, 
		Siruseri, Kelambakkam,  603103, India.}
	
\email{kannan@cmi.ac.in.}
	
\address{Chennai Mathematical Institute, Plot H1, SIPCOT IT Park, 
		Siruseri, Kelambakkam, 603103, India.}
\email{pinakinath@cmi.ac.in.}
\title{Parabolic subgroups and Automorphism groups of Schubert
varieties}

\begin{abstract}
		
		Let $G$ be a simple algebraic group of adjoint type over the field $\mathbb{C}$ 
		of complex numbers,  $B$ be a Borel subgroup of $G$ containing a maximal torus $T$ of $G.$	 Let $w$ be an element of the Weyl group $W$ and $X(w)$ be the Schubert variety in $G/B$
		corresponding to $w$. In this article we show that given any parabolic subgroup $P$ of $G$ containing $B$ properly, there is an element $w\in W$ such that $P$ is the  connected component, containing the identity element of the group of all algebraic automorphisms of $X(w).$ 
	
\end{abstract}	
%\subjclass[2010]{11A05 (primary); 11R04 (secondary)}
%\keywords{Euclidean rings, number fields, class number, non-Wieferich primes, primitive roots}

\maketitle

\begin{otherlanguage}{french}

\begin{abstract}
Soit $G$ un groupe alg\'ebrique du type adjoint  sur le corps des nombres complexes $\mathbb{C}$ et $B$ un sous groupe de Borel de $G$ contenant un tore maximal $T.$ Soit  $w$ un \'el\'ement  du groupe de Weil $W$ et $X(w)$ un \'el\'ement de la vari\'et\'e de Schubert dans $G/B$ correspondant \`a $w.$ Dans cet article nous montrons que pour tout sous-groupe parabolique $P$ de $G$ contenant $B,$ il existe un \'el\'ement $w$ dans $W$ tel que   
$P$ est la composante connexe contenant l'\'el\'ement identit\'e du groupe des automorphismes alg\'ebrique de $X(w).$
\end{abstract}
\end{otherlanguage}
 
\section{Introduction}Recall that if $X$ is a projective variety over $\mathbb{C}$, the connected component, containing the identity element of the group of all algebraic automorphisms of $X$ is an algebraic group (see \cite[Theorem 3.7, p.17]{MO}). Let $G$ be a simple algebraic group of adjoint type over $\mathbb{C}.$ Let $T$  be a maximal torus  of $G,$  and let $R$ be the set of roots with respect to $T.$ Let $R^{+}\subset R$ be a set of positive roots. Let $B^{+}$ be the Borel subgroup of $G$ containing $T,$ corresponding to $R^{+}$. Let $B$ be the Borel subgroup of $G$ opposite to $B^+$ determined by $T$.
For $w \in W$, let $X(w):=\overline{BwB/B}$ denote the Schubert variety in $G/B$ corresponding to 
$w$.
Let $Aut^{0}(X(w))$ denote the connected component, containing the identity element of the group of all algebraic automorphisms of $X(w).$ 
Let $\alpha_{0}$ denote the highest root of $G$ with respect to $T$ and $B^{+}.$ For the left action of $G$ on $G/B$, let $P_{w}$ denote the stabiliser of $X(w)$ in $G.$
 If $G$ is simply laced and $X(w)$ is smooth, then we have $P_{w}=Aut^{0}(X(w))$ if and only if $w^{-1}(\alpha_{0})<0$ (see \cite[Theorem 4.2(2), p.772]{Ka}). Therefore it is a natural question to ask whether given any parabolic subgroup $P$ of $G$ containing $B$ properly, is there an element $w\in W$ such that $P=Aut^{0}(X(w))$ ? In this article we show that this question has an affirmative answer (see Theorem \ref {thm 2.1}). If $P=B,$ there is no such Schubert variety in $G/B.$ We prove some partial results for Schubert varities in partial flag varieties of type $A_{n}.$ If $P'$ is the maximal parabolic subgroup of $PSL(n+1, \mathbb{C})$ corresponding to the simple root $\alpha_{1}$ or $\alpha_{n},$  then $G/P'$ is the projective space $\mathbb{P}^n.$ The Schubert varieties in $\mathbb{P}^n$ are $\mathbb{P}^i$ ($0\le i \le n$). $\mathbb{P}^n$ is the only Schubert variety in $\mathbb{P}^{n}$ for which the action of $B$ is faithful. Further, we have $Aut^0(\mathbb{P}^n)=PSL(n+1, \mathbb{C})$ (see Corollary \ref {cor 6.4}). Therefore the answer to the above question is negative if we consider partial flag varieties. 
%\section{preliminaries}
%For preliminaries in algebraic groups and Lie algebras, we refer to \cite{BK}, \cite{Hum1}, \cite{Hum2}, \cite{Jan}.

\section{Notation  and Result}
In this section, we set up some notation and preliminaries. We refer to \cite{BK}, \cite{Hum1}, \cite{Hum2}, \cite{Jan} for preliminaries in algebraic groups and Lie algebras.

%We refer to [9] and [10] for preliminaries in Lie algebras and algebraic groups.
Let $G$ be a simple algebraic group of adjoint type over $\mathbb{C}$ and $T$ be a maximal torus of
 $G$.  Let $W=N_{G}(T)/T$ denote the Weyl group of $G$ with respect to $T$ and we denote 
  the set of roots of $G$ with respect to $T$ by $R$. Let $B^{+}$ be a  Borel subgroup of $G$ 
  containing $T$. Let $B$ be the Borel subgroup of $G$ opposite to $B^{+}$ determined by $T$. 
  That is, $B=n_{0}B^{+}n_{0}^{-1}$, where $n_{0}$ is a representative in $N_{G}(T)$ of the longest     element $w_{0}$ of $W$. Let  $R^{+}\subset R$ be 
  the set of positive roots of $G$ with respect to the Borel subgroup $B^{+}$. Note that the set of 
  roots of $B$ is equal to the set $R^{-} :=-R^{+}$ of negative roots.

Let $S = \{\alpha_1,\ldots,\alpha_n\}$ denote the set of simple roots in
$R^{+}.$ For $\beta \in R^{+},$ we also use the notation $\beta > 0$.  
The simple reflection in  $W$ corresponding to $\alpha_i$ is denoted
by $s_{\alpha_i}$. Let $\mathfrak{g}$ be the Lie algebra of $G$. 
Let $\mathfrak{h}\subset \mathfrak{g}$ be the Lie algebra of $T$ and  $\mathfrak{b}\subset \mathfrak{g}$ be the Lie algebra of $B$. Let $X(T)$ denote the group of all characters of $T$. 
We have $X(T)\otimes \mathbb{R}=Hom_{\mathbb{R}}(\mathfrak{h}_{\mathbb{R}}, \mathbb{R})$, the dual of the real form of $\mathfrak{h}$. The positive definite 
$W$-invariant form on $Hom_{\mathbb{R}}(\mathfrak{h}_{\mathbb{R}}, \mathbb{R})$ 
induced by the Killing form of $\mathfrak{g}$ is denoted by $(~,~)$. 
We use the notation $\left< ~,~ \right>$ to
denote $\langle \mu, \alpha \rangle  = \frac{2(\mu,
\alpha)}{(\alpha,\alpha)}$,  for every  $\mu\in X(T)\otimes \mathbb{R}$ and $\alpha\in R$.  
%Let $\{x_{\alpha},~h_{\beta}: \alpha \in R,  ~  \beta \in S\}$ denote the Chevalley basis of $\mathfrak{g}$
%corresponding to $R.$ For $\alpha\in R$, we denote by 
%$\mathfrak{g}_{\alpha}$ the one dimensional root subspace of $\mathfrak{g}$ spanned by $x_{\alpha}$. Let $sl_{2,\alpha}$ denote the $3$ dimensional 
%Lie subalgebra of $\mathfrak{g}$ generated by $x_{\alpha}$ and $x_{-\alpha}$. 
%Let $\leq$ denote the partial order on $X(T)$ given by $\mu\leq \lambda$
%if $\lambda-\mu$ is a non negative integral linear combination of simple 
%roots.
We denote by $X(T)^+$ the set of dominant characters of 
$T$ with respect to $B^{+}$. Let $\rho$ denote the half sum of all 
positive roots of $G$ with respect to $T$ and $B^{+}.$
For any simple root $\alpha$, we denote the fundamental weight
corresponding to $\alpha$  by $\omega_{\alpha}.$  For $1\leq i \leq n,$ let $h(\alpha_{i})\in \mathfrak{h}$ be the fundamental coweight corresponding to $\alpha_{i}.$ That is ; $\alpha_{i}(h(\alpha_{j}))=\delta_{ij},$ where $\delta_{ij}$ is Kronecker delta.  

For $w \in W$, let $l(w)$ denote the length of $w$. We define the 
dot action of $W$ on $X(T)\otimes \mathbb{R}$ by $$w\cdot\lambda=w(\lambda+\rho)-\rho,  ~  where  ~ w\in W  ~ and ~ \lambda\in X(T)\otimes \mathbb{R}.$$ 
 We set $R^{+}(w):=\{\beta\in R^{+}:w(\beta)\in -R^{+}\}$.  For $w \in W$, 
let $X(w):=\overline{BwB/B}$ denote the Schubert variety in $G/B$ 
corresponding to $w$.

For a simple root $\alpha$, we denote by $P_{\alpha}$ the minimal parabolic subgroup of $G$  generated by $B$ and $n_{\alpha}$, where $n_{\alpha}$ is a representative of $s_{\alpha}$ in $N_{G}(T)$ and we denote by $P_{\hat{\alpha}}$ the maximal parabolic subgroup of $G$ generated by $B$ and $\{n_{\beta}: \beta \in S\setminus \{\alpha\}\},$ where $n_{\beta}$ is a representative of $s_{\beta}$ in $N_{G}(T).$ For a subset $J$ of $S,$ we denote by $W_{J}$ the subgroup of $W$ generated by $\{s_{\alpha}:\alpha \in J\}$. Let $W^{J}:=\{w\in W: w(\alpha)\in R^{+}~ for ~ all ~ \alpha \in J\}.$  For each $w\in W_{J},$ choose a representative element $n_{w}\in N_{G}(T).$ Let $N_{J}:=\{n_{w}: w\in W_{J}\}.$  Let $P_{J}:=BN_{J}B.$ 

Our main result in this article is the following :

\begin{thm}\label{thm 2.1}
Let $G$ be a simple algebraic group of adjoint type over $\mathbb{C}$ and $P$ be a parabolic subgroup of $G$ containing $B$ properly. Then there is an element $w\in W$ such that $P=Aut^{0}(X(w))$.
\end{thm}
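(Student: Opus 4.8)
The plan is to reduce the statement to one construction per parabolic. Every parabolic subgroup of $G$ containing $B$ is of the form $P_J$ for a unique subset $J\subseteq S$, and $P_J\supsetneq B$ forces $J\neq\emptyset$; so it suffices to produce, for each nonempty $J\subseteq S$, an element $w\in W$ with $Aut^0(X(w))=P_J$. When $J=S$ we have $P_S=G$, and we take $w=w_0$: then $X(w_0)=G/B$ and $Aut^0(G/B)=G$ since $G$ is of adjoint type, so this case is settled. Assume henceforth $J\subsetneq S$ and write $K=S\setminus J$.

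For the candidate I would take $w:=w_{0,K}\,w_0$, where $w_{0,K}$ is the longest element of $W_K$; equivalently $w^{-1}=w_0 w_{0,K}$ is the longest element of $W^{K}$. I would first record the three elementary properties that exhibit $P_J$ as a lower bound for $Aut^0(X(w))$. First, the left descent set of $w$ equals $J$: for $\alpha\in J$ one computes $l(w^{-1}s_\alpha)=l(w_0)-l(w_{0,K}s_\alpha)=l(w^{-1})-1$, so $l(s_\alpha w)=l(w)-1$, while for $\alpha\in K$ the relation $w^{-1}\in W^{K}$ shows $s_\alpha$ is not a left descent; consequently the $G$-stabiliser $P_w$ of $X(w)$ equals $P_J$. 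Second, $\mathrm{Supp}(w)=S$: the inversion set of $w^{-1}$ is $R^{+}\setminus R^{+}_K$, which contains $\alpha_0$ and hence roots supported on every simple root, so every simple reflection occurs in $w$; since $G$ is adjoint this gives a faithful action of $T$, hence of $B$, hence of $P_w=P_J$, on $X(w)$, whence an inclusion $P_J\hookrightarrow Aut^0(X(w))$. Third, $w^{-1}(\alpha_0)<0$: as $\alpha_0$ has strictly positive coefficient on each simple root, $w_{0,K}(\alpha_0)>0$, and then $w_0$ carries it below zero.

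For the reverse inclusion I would compute the Lie algebra of $Aut^0(X(w))$, namely the global vector fields $H^0(X(w),T_{X(w)})$, and show that it has dimension $\dim\mathfrak{p}_J$; combined with the inclusion $\mathfrak{p}_J\subseteq H^0(X(w),T_{X(w)})$ coming from the faithful action above, this forces $Aut^0(X(w))=P_J$ by connectedness and a dimension count. The tool is the restriction to $X(w)$ of the tangent bundle of $G/B$, for which $H^0(G/B,T_{G/B})=\mathfrak{g}$, together with the short exact sequences relating $T_{X(w)}$, $T_{G/B}|_{X(w)}$ and the complementary root directions. The condition $w^{-1}(\alpha_0)<0$ is precisely what removes the otherwise-present extra section associated with the highest-root copy of $\mathfrak{sl}_2$, which is the mechanism producing automorphisms outside $P_w$ (the same phenomenon as for projective space recorded in the introduction). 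Demazure-type character computations together with Kempf/Bott vanishing on a Bott--Samelson resolution of $X(w)$ would then bound the weights occurring in $H^0(X(w),T_{X(w)})$ to those of $\mathfrak{p}_J$.

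The main obstacle is this last cohomological computation in full generality. The criterion quoted from \cite{Ka} assumes $G$ simply laced and $X(w)$ smooth, whereas here $G$ may be non-simply laced and $w=w_{0,K}w_0$ need not yield a smooth $X(w)$ (already in type $A_3$, for $J=\{\alpha_2\}$, one meets a singular pattern). I therefore expect either to replace $w$ by a smooth representative enjoying the same three properties for each $J$ and each Dynkin type, or to run the vector-field estimate directly on the possibly singular $X(w)$ via its Bott--Samelson resolution, taking care that \emph{short} roots in the non-simply-laced types contribute no unexpected sections. Controlling these short-root contributions, and verifying that once $w^{-1}(\alpha_0)<0$ no global vector field of $X(w)$ has weight outside $\mathfrak{p}_J$, is the crux; the remaining verifications are bookkeeping with descent sets and supports.
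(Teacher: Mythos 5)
Your reduction, your candidate $w=w_{0,K}w_{0}$ (so that $w^{-1}=w_{0}w_{0,K}$ is the maximal element of $W^{K}$ --- exactly the element $(w_{0}^{J})^{-1}$ the paper takes), and your three lower-bound verifications (left descent set $=J$, hence stabiliser $P_{J}$; full support; $w^{-1}(\alpha_{0})<0$) all match the paper. But the reverse inclusion, which you yourself flag as the crux, is left as a plan rather than a proof, and this is a genuine gap: you propose to bound the weights of $H^{0}(X(w),T_{X(w)})$ by those of $\mathfrak{p}_{J}$ via Demazure-type computations on a Bott--Samelson resolution, uniformly in $J$ and in the Dynkin type, and you concede that you do not know how to control the short-root contributions in the non-simply-laced types, nor whether a smooth representative must be substituted. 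Nothing in your text carries this out, and such a computation for arbitrary $w=w_{0,K}w_{0}$ in arbitrary type is a substantial undertaking --- it is precisely what the paper is structured to avoid.

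The missing device is the following. Set $J'=-w_{0}(K)$ (in your notation) and consider the projection $\pi:X(w)\longrightarrow G/P_{J'}$, which is birational; since $\pi_{*}(\mathcal{O}_{X(w)})=\mathcal{O}_{G/P_{J'}}$ by normality results for Schubert varieties \cite{BK}, Brion's theorem \cite{Bri} yields an \emph{injective} homomorphism $Aut^{0}(X(w))\longrightarrow Aut^{0}(G/P_{J'})$, and by Akhiezer's theorem \cite{Akh} one has $Aut^{0}(G/P_{J'})=G$ except for three exceptional pairs. So outside three cases one gets $Aut^{0}(X(w))\subseteq G$ with no tangent-sheaf cohomology at all, and then $P=Aut^{0}(X(w))$ follows at once because $P$ is the full stabiliser of $X(w)$ in $G$: note the paper embeds $Aut^{0}$ into $G$, not directly into $P_{J}$ as you attempt, and this weaker bound suffices. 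The three residual cases ($B_{n}$ with $P_{\alpha_{n}}$, $C_{n}$ with $P_{\alpha_{1}}$, $G_{2}$ with $P_{\alpha_{1}}$) are then handled by hand, but with \emph{different, shorter} choices of $w$ (e.g.\ $w=s_{1}s_{2}\cdots s_{n}$ in type $C_{n}$), for which $H^{0}(w,\mathfrak{g}/\mathfrak{b})=\mathfrak{g}$ is computed step by step from Demazure's lemma and the structure of indecomposable $B_{\alpha}$-modules; since the paper works with the restriction of $T_{G/B}$, into which $T_{X(w)}$ embeds, smoothness of $X(w)$ is never needed, so your contemplated detour through a smooth representative is unnecessary. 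One smaller point: for injectivity of $\phi_{w}:P\longrightarrow Aut^{0}(X(w))$ the paper invokes \cite{Ka} using $w^{-1}(\alpha_{0})<0$; your alternative via full support and adjointness gives faithfulness of the $T$-action but does not rule out a nontrivial (non-toral) kernel of the $P$-action, so you should either cite the same result or supply that argument.
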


Let $G=PSL(n+1, \mathbb{C}).$ For $1\le r\le n$ and $w\in W^{S\setminus\{\alpha_{r}\}},$ we denote the Schubert variety corresponding to $w$ in the Grassmannian $G/P_{\hat{\alpha}_{r}},$ by $X_{P_{\hat {\alpha}_{r}}}(w).$ 
 
\begin{prop}
	Let $w=(s_{a_{1}}\cdots s_{1})(s_{a_{2}}\cdots s_{2})\cdots (s_{a_{r}}\cdots s_{r})\in W(r).$ Let $J'(w):=\{i\in \{1,2,\dots,r-1\}: a_{i+1}-a_{i}\ge 2\}$, $J''(w)=\{1+a_{i}: i\in J'(w)\}$ and $J(w)=\{\alpha_{j}: j\in \{1, \ldots ,n\}\setminus J''(w)\}.$
	Then we have $P_{J(w)}=Aut^0(X_{P_{\hat {\alpha}_{r}}}(w))$.
	
	\end{prop}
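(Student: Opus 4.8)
The plan is to prove the two inclusions $P_{J(w)}\subseteq Aut^0(X)$ and $Aut^0(X)\subseteq P_{J(w)}$, where I abbreviate $X:=X_{P_{\hat{\alpha}_{r}}}(w)$. Since $Aut^0(X)$ is a connected algebraic group (by the fact quoted in the introduction), the second inclusion can be obtained at the level of Lie algebras, using $\mathrm{Lie}(Aut^0(X))\cong H^0(X,\mathcal{T}_X)$, the space of global vector fields. So the backbone of the argument is: first realise $P_{J(w)}$ inside $Aut^0(X)$ as the stabiliser of $X$ in $G$, and then show that $X$ carries no global vector fields beyond those coming from $\mathfrak{p}_{J(w)}=\mathrm{Lie}(P_{J(w)})$.

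First I would identify the stabiliser $P_w:=\{g\in G: g\,X=X\}$ for the left action of $G$ on $G/P_{\hat{\alpha}_{r}}$. This is always a standard parabolic subgroup, generated by $B$ together with the minimal parabolics $P_{\alpha_j}$ preserving $X$, and $P_{\alpha_j}$ preserves $X$ exactly when $s_{\alpha_j}w\le w$ in the Bruhat order on $W/W_{S\setminus\{\alpha_r\}}$. Translated into roots, $P_{\alpha_j}$ \emph{fails} to stabilise $X$ precisely when $w^{-1}(\alpha_j)>0$ and $w^{-1}(\alpha_j)\notin S\setminus\{\alpha_r\}$ (in the remaining cases either $s_{\alpha_j}w<w$, or $s_{\alpha_j}$ fixes the point $wP_{\hat{\alpha}_{r}}$ because $w^{-1}(\alpha_j)\in S\setminus\{\alpha_r\}$). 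The key combinatorial step is then to feed the staircase word $w=(s_{a_1}\cdots s_1)\cdots(s_{a_r}\cdots s_r)$ into this criterion and verify the identity $\{j: P_{\alpha_j}\text{ does not stabilise }X\}=J''(w)=\{1+a_i: i\in J'(w)\}$: the gaps $a_{i+1}-a_i\ge 2$ of the staircase are exactly the indices where $w^{-1}$ sends $\alpha_{1+a_i}$ to a non-simple positive root. This is a direct, if careful, computation with the one-line notation of the Grassmannian permutation $w$, and it yields $P_w=P_{J(w)}$.

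Next I would promote this to an embedding $P_{J(w)}\hookrightarrow Aut^0(X)$. The action gives a homomorphism of algebraic groups $P_w\to Aut^0(X)$ whose kernel $K$ is normal in $P_w$. By the defining property of $W(r)$ the Borel $B$ acts faithfully on $X$, so $K\cap B=\{1\}$; since every element of $K$ is conjugate into $B$ and $K$ is normal, this forces $K=\{1\}$. Hence $P_{J(w)}\cong P_w$ embeds into $Aut^0(X)$, and being connected it lies in $Aut^0(X)$, giving an inclusion $\mathfrak{p}_{J(w)}\hookrightarrow H^0(X,\mathcal{T}_X)$ of Lie algebras.

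The reverse inclusion is the crux, and I expect it to be the main obstacle. Here one must prove $\dim H^0(X,\mathcal{T}_X)\le \dim\mathfrak{p}_{J(w)}$, after which connectedness together with the inclusion above forces $Aut^0(X)=P_{J(w)}$. I would attempt the bound in one of two ways. The representation-theoretic route: the Picard group of $X$ is generated by the restriction $L$ of the Pl\"ucker bundle, so the connected group $Aut^0(X)$ fixes $L$ and hence acts linearly on $H^0(X,L)^{*}$; identifying $H^0(X,L)$ with the relevant Demazure module reduces the claim to computing the subgroup of $PGL(H^0(X,L)^{*})$ preserving the embedded $X$, which one then matches with $P_{J(w)}$. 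The cohomological route, closer to the method behind Theorem \ref{thm 2.1}: use the Bott--Samelson resolution of $X$ (which has rational singularities) with vanishing theorems to bound $H^0(X,\mathcal{T}_X)$, or transport the $G/B$ computation along the birational projection $X(w)\to X$ available for $w\in W^{S\setminus\{\alpha_r\}}$. Either way, the delicate point is controlling the ``extra'' vector fields supported near the singular locus of $X$ — which is governed precisely by the gaps recorded in $J'(w)$ — and confirming that they contribute nothing beyond $\mathfrak{p}_{J(w)}$.
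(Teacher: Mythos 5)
Your first two steps do track the paper's proof of Proposition \ref{prop 4.2}: the identification $P_w=P_{J(w)}$ via the staircase combinatorics is exactly the paper's first paragraph (one small correction: the criterion for $P_{\alpha_j}$ to stabilise $X$ is $w^{-1}(\alpha_j)<0$ or $w^{-1}(\alpha_j)\in R(P_{\hat{\alpha}_{r}})$, the full root system of the Levi, not merely $w^{-1}(\alpha_j)\in S\setminus\{\alpha_{r}\}$; minimality of $w$ in its coset does not force the image to be simple), and your injectivity of $P_{J(w)}\to Aut^{0}(X)$ from $w^{-1}(\alpha_{0})<0$ is exactly the paper's appeal to \cite[Theorem 4.2(2)]{Ka}, though your normal-subgroup argument for faithfulness silently assumes the very implication ($w^{-1}(\alpha_{0})<0 \Rightarrow B$ acts faithfully) that this citation supplies.

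The genuine gap is the reverse inclusion, which you yourself flag as unresolved: you set the target as $\dim H^{0}(X,\mathcal{T}_X)\le\dim\mathfrak{p}_{J(w)}$ and offer two candidate strategies, neither carried out. The Pl\"ucker route essentially begs the question (computing the subgroup of $PGL(H^0(X,L)^{*})$ preserving the embedded $X$ is the statement to be proved), and the Bott--Samelson/vanishing route never identifies the actual cohomological input. Moreover you aim at a stronger bound than necessary, which is why the singular locus looks like an obstacle. The paper's mechanism is cheaper: since $\mathcal{T}_X$ is a subsheaf of $T_{G/P_{\hat{\alpha}_{r}}}|_{X}$, one only needs $H^{0}(w,\mathfrak{g}/\mathfrak{p}_{\hat{\alpha}_{r}})=\mathfrak{g}$, i.e.\ that \emph{all} vector fields on $X$ come from $\mathfrak{g}$ — not that they lie in $\mathfrak{p}_{J(w)}$. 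This follows because $G=PSL(n+1,\mathbb{C})$ is simply laced, so by \cite[Lemma 3.5(3)]{Ka} the restriction map $H^{0}(w_{0,r},\mathfrak{g}/\mathfrak{p}_{\hat{\alpha}_{r}})\longrightarrow H^{0}(w,\mathfrak{g}/\mathfrak{p}_{\hat{\alpha}_{r}})$ is surjective, and $w^{-1}(\alpha_{0})<0$ makes it an isomorphism from $\mathfrak{g}$. Then $Aut^{0}(X)$ is a closed connected subgroup of $G$ containing $P_{J(w)}$; since it stabilises $X$, it is contained in the stabiliser $P_w=P_{J(w)}$, and your step 1 does double duty to close the argument. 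So the missing idea is twofold: the reduction of the upper bound to containment in $\mathfrak{g}$ (with the stabiliser computation finishing the job), and the specific surjectivity lemma for simply laced groups that delivers that containment; without these, your outline does not constitute a proof of the crux.
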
 

For more precise statement see Proposition \ref{prop 4.2}.

\section{proof of theorem 2.1 except in three cases}
In this section we prove Theorem \ref {thm 2.1} in all cases except in three cases. The three cases left will be treated by Proposition \ref{BCG}.

%Proof of the theorem is divided in two Steps. In Step 1, we prove the theorem in all cases except three exceptional cases as in Proposition \ref{BCG}. In Step 2, we prove the theorem in the three exceptional cases as in Proposition \ref{BCG}.

 %By theorem \cite[Theorem 4.2, p.772]{Ka} and its proof, it is sufficient to prove that there is an element $w\in W$ such that 
 
%\begin{itemize} 
% 	\item[(i)] $P$ is the stabiliser of $X(w)$ in $G.$ 
% 	\item [(ii)] $w^{-1}(\alpha_{0}) < 0.$ 
%   \item[(iii)] The natural homomorphism $\phi_{w}:P\longrightarrow Aut^0(X(w))$ is surjective.
%\end{itemize}
 
%Proof of Step 1:
\begin{proof}
Let $P$ be a parabolic subgroup of $G$ containing $B$ properly. If $P=G,$ then we take $w=w_{0}$, the longest element $w_0$ of $W.$ In this case, we have the following:
	
\begin{center}
		$Aut^0(X(w_0))=Aut^0(G/B)= G$ (see \cite[Theorem 2, p.75]{Akh}).
\end{center}
	
Now we assume that $P$ is any proper parabolic subgroup of $G$ such that $B \subsetneq P \subsetneq G.$ Since  $B \subsetneq P \subsetneq G,$ there is a subset $\emptyset \neq I \subsetneq S$  such that $P=P_{I}.$ Consider  $J=S \setminus I$.  Hence, there exist unique elements $ w_{0}^{J} \in  W^{J}$ and ${w_{0}},_{J}\in W_{J}$ such that $w_{0}=w_{0}^J\cdot {w_{0}},_{J}$.
Consider the natural left action of $G$ on $G/B$. Take $w=(w_{0}^{J})^{-1}$. Then $P$ is the stabiliser of $X(w)$, since $R^{+}(w^{-1})\cap S=I.$ The natural action of $P$ on $X(w)$ induces a homomorphism,
\begin{center}
		$\phi_{w}:P\longrightarrow Aut^{0}(X(w))$
\end{center} 
of algebraic groups.
	
We note that $\phi_{w}:P\longrightarrow Aut^{0}(X(w))$ is injective, since $w^{-1}(\alpha_{0})<0$ (see \cite[Theorem 4.2(2), p.772]{Ka}).

	Let $J':=-w_{0}(J),$ and $P':=P_{J'}.$ Consider the natural morphism $\pi: G/B \longrightarrow G/P'.$ We denote the restriction of $\pi$ to $X(w)$ also by $\pi.$ Then 
	$\pi: X(w)\longrightarrow G/P'$ is a birational morphism. Therefore by \cite[Theorem 3.3.4(a), p.96]{BK} and 
	\cite[Lemma 3.3.3(b), p.95]{BK}  we have,
	
\begin{center}
		$\pi_{*}(\mathcal{O}_{X(w)})=\mathcal{O}_{G/P'}.$
\end{center}
	
Thus from \cite[Corollary 2.2., p.45]{Bri}, $\pi$ induces a homomorphism of algebraic groups
	$$\pi_{*}:Aut^{0}(X(w))\longrightarrow Aut^{0}(G/P').$$
Since $\pi$ is birational, $\pi_{*}:Aut^{0}(X(w))\longrightarrow Aut^{0}(G/P')$ is injective.
	
If $G$ is of type $B_{n}, C_{n}$ or $G_{2},$ then $w_{0}=-id$ (see \cite[p.216, p.217, p.233]{B}). If $G$ is of type $B_{n}$ and $P=P_{\alpha_{n}},$ then $I=\{\alpha_{n}\}.$ Therefore $J'=-w_{0}(J)=J =S \setminus \{\alpha_{n}\}$ and $P'=P_{\hat{\alpha_{n}}}.$ Thus $(G, P')$ is one of the three types as in the statement of \cite[Theorem 2, p.75]{Akh}.
If $G$ is of the type $C_{n}$ and $P=P_{\alpha_{1}},$ then $(G, P')=(G, P_{\hat{\alpha_{1}}})$ is one of the three types as in the statement of \cite[Theorem 2, p.75]{Akh}. 
If $G$ is of type $G_{2}$ and $P=P_{\alpha_{1}}$ then $(G, P')=(G, P_{\hat{\alpha_{1}}})=(G, P_{\alpha_{2}})$ is one of the three types as in the statement of \cite[Theorem 2, p.75]{Akh}.
Similarly, we can see that if $(G, P')$ is one of the three types as in \cite[Theorem 2, p.75]{Akh}, then $(G, P)$ is one of the three types as in the statement of Proposition \ref{BCG}.
	
 Case 1: $G$ is not of type $B_{n}, C_{n}$ and $G_{2}.$ Then for any parabolic subgroup $P$ of $G,$ $(G, P)$ is not one of the three types as in Proposition \ref{BCG}. Therefore $(G, P')$ is not one of the three exceptional types as in the statement of \cite[Theorem 2, p.75]{Akh}.
	
 Case 2: $G=B_{n}$ or $C_{n}$ or $G_{2}$ and $(G, P)$ is not one of the three types as in the statement of Proposition \ref{BCG}. In these cases $w_{0}=-id$ and  $J'=-w_{0}(J)=J=S\setminus I.$ Therefore $(G, P')$ is not one of the three exceptional types as in the statement of \cite[Theorem 2, p.75]{Akh}. Thus $(G, P)$ is not one of the three types as in the statement of Proposition \ref{BCG} if and only if $(G, P')$ is not one of the three exceptional types as in the statement of \cite[Theorem 2, p.75]{Akh}.	
Hence, we have $Aut^{0}(G/P')=G.$ Therefore $Aut^0(X(w))$ is a parabolic subgroup of $G$ containing $P.$ Since $P$ is the stabiliser of $X(w),$ we have $P=Aut^0(X(w)).$ Now, the proof follows from the proofs of Case 1 and Case
2.
\end{proof}

\section{preliminaries for three left cases}

Let $V$ be a rational $B$-module. Let $\phi:B\longrightarrow GL(V)$ be the corresponding homomorphism of algebraic groups. The total space of the vector bundle  $\mathcal{L}(V)$  on $G/B$ is defind by the set of equivalence classes 
$\mathcal{L}(V)= G \times_{B} V$ corresponding to the following equivalence relation on $G\times V$:
\begin{center}
	$(g,v)\sim (gb,\phi(b^{-1})\cdot v)$ for $g\in G, b\in B, v\in V.$ 
\end{center}
We denote the restriction of $\mathcal{L}(V)$ to $X(w)$ also by $\mathcal{L}(V)$. We denote the cohomology modules $H^i(X(w), \mathcal{L}(V))$ by $H^i(w, V)$ ($i\in \mathbb{Z}_{\ge 0}$). If $V=\mathbb{C}_{\lambda}$ is one dimensional representation $\lambda: B\longrightarrow \mathbb{C}^{\times}$ of $B,$ then we denote $H^i(w, V)$ by $H^i(w, \lambda).$

Let $L_{\alpha}$ denote the Levi subgroup of $P_{\alpha}$
containing $T$. Note that $L_{\alpha}$ is the product of $T$ and the homomorphic image 
$G_{\alpha}$ of $SL(2, \mathbb{C})$ via a homomorphism $\psi:SL(2, \mathbb{C})\longrightarrow L_{\alpha}$ ( see  [7, II, 1.3] ). We denote the intersection of $L_{\alpha}$ and $B$ by $B_{\alpha}$.  
We note that the morphism $L_{\alpha}/B_{\alpha}\hookrightarrow P_{\alpha}/B$  induced by the inclusion $L_{\alpha}\hookrightarrow P_{\alpha}$ is an isomorphism. Therefore,  to compute the cohomology modules $H^{i}(P_{\alpha}/B, \mathcal{L}(V))$ ($0\leq i \leq 1$) for any $B$-module 
$V,$ we treat $V$ as a $B_{\alpha}$-module  and we compute 
$H^{i}(L_{\alpha}/B_{\alpha}, \mathcal{L}(V))$. 

We use the following lemma to compute cohomology groups. The following lemma is due to Demazure (see \cite[p.1]{Dem}). He used this lemma to prove  Borel-Weil-Bott's theorem.

\begin{lem}\label{lem 2.1}
	Let $w=\tau s_{\alpha},$ $l(w)=l(\tau) + 1,$ and $\lambda$ be a character of $B.$ Then we have 
	\begin{enumerate}
		
		\item [(1)] If $\langle \lambda , \alpha \rangle\ge0,$ then $H^{j}(w, \lambda)=H^{j}(\tau, H^0(s_{\alpha}, \lambda))$ for all $j\ge 0.$
		
		\item[(2)] If $\langle \lambda , \alpha \rangle\ge 0,$ then $H^j(w, \lambda)=H^{j+1}(w, s_{\alpha}\cdot \lambda)$ for all $j\ge0.$ 
		
		\item[(3)] If $\langle \lambda, \alpha\rangle\le -2 ,$ then $H^{j+1}(w, \lambda)=H^j(w, s_{\alpha}\cdot \lambda)$ for all $j\ge 0.$
		
		\item[(4)] If $\langle \lambda , \alpha \rangle=-1,$ then $H^j(w, \lambda)$ vanishes for every $j\ge0.$
	\end{enumerate}
\end{lem}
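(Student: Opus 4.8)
The plan is to prove Lemma~\ref{lem 2.1} by reducing everything to the rank-one situation, exactly in the spirit of Demazure's original argument. Because $w = \tau s_\alpha$ with $l(w) = l(\tau)+1$, the multiplication morphism $P_\alpha \times_B X(\tau) \to X(w)$ is a fibration whose fibres are isomorphic to $P_\alpha/B \cong L_\alpha/B_\alpha \cong \mathbb{P}^1$. The first step is to set up this $\mathbb{P}^1$-bundle $f \colon X(w) \to X(w)/P_\alpha$ (more precisely the bundle $P_\alpha \times_B X(\tau)$) and to invoke the Leray spectral sequence, or equivalently the projection formula, to relate $H^j(w,\lambda)$ to the cohomology of $X(\tau)$ with coefficients in the pushforward sheaves $R^i f_* \mathcal{L}(\lambda)$.

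The heart of the matter is therefore a computation on a single $\mathbb{P}^1 \cong L_\alpha/B_\alpha$ of the cohomology $H^i(L_\alpha/B_\alpha, \mathcal{L}(\lambda))$ for $i = 0,1$, viewing $\lambda$ as a $B_\alpha$-character. This is the classical Borel--Weil--Bott statement in rank one, and it splits into the four numerical cases according to the value of $\langle \lambda, \alpha\rangle$. First I would record that on $\mathbb{P}^1$ the line bundle attached to the character $\lambda$ is $\mathcal{O}(\langle\lambda,\alpha\rangle)$, so that: when $\langle\lambda,\alpha\rangle \ge 0$ one has $H^0 \neq 0$ and $H^1 = 0$, with $H^0(s_\alpha,\lambda)$ being the irreducible $L_\alpha$-module of highest weight $\lambda$; when $\langle\lambda,\alpha\rangle \le -2$ one has $H^0 = 0$ and $H^1 \neq 0$, with the one cohomological degree shifted by one and the weight replaced by $s_\alpha\cdot\lambda$ (the dot action precisely reflecting Serre duality $\mathcal{O}(d)\mapsto \mathcal{O}(-d-2)$); and when $\langle\lambda,\alpha\rangle = -1$, corresponding to $\mathcal{O}(-1)$, both $H^0$ and $H^1$ vanish.

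Having this rank-one dictionary, each of the four assertions follows by feeding it into the spectral sequence. For part~(1), when $\langle\lambda,\alpha\rangle \ge 0$ only $R^0 f_*$ survives and equals $\mathcal{L}(H^0(s_\alpha,\lambda))$ on $X(\tau)$, so the spectral sequence degenerates and gives $H^j(w,\lambda) = H^j(\tau, H^0(s_\alpha,\lambda))$ for all $j$. For part~(3), when $\langle\lambda,\alpha\rangle \le -2$ only $R^1 f_*$ survives, carrying the weight $s_\alpha\cdot\lambda$, and the single nonzero row produces the degree shift $H^{j+1}(w,\lambda) = H^j(w, s_\alpha\cdot\lambda)$. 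Part~(4) is immediate since both pushforwards vanish when $\langle\lambda,\alpha\rangle = -1$. Part~(2) I would deduce either by the same degeneration bookkeeping or, more cleanly, by combining~(1) with the long exact sequence coming from the $L_\alpha$-module structure and the identity $H^0(s_\alpha, s_\alpha\cdot\lambda)=H^1(s_\alpha,\lambda)$ in the appropriate range.

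The main obstacle is not any single case but the clean organization of the spectral-sequence (or Leray) argument so that the dot action and the index shift emerge correctly and uniformly; in particular one must be careful that $\mathcal{L}(\lambda)$ restricted to each fibre really is $\mathcal{O}(\langle\lambda,\alpha\rangle)$ with the sign conventions fixed by $B$ being the \emph{opposite} Borel, and that the pushforward sheaves are again of the form $\mathcal{L}(\mu)$ for the correct $L_\alpha$-module or character $\mu$ so that the induction down to $X(\tau)$ closes up. Since this lemma is standard and due to Demazure, I would keep the rank-one computation explicit and then merely cite the degeneration of the two-row spectral sequence rather than writing it out in full.
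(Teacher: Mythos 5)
You should first note what you are comparing against: the paper gives no proof of Lemma \ref{lem 2.1} at all --- it quotes it from Demazure's note \cite{Dem} --- so the benchmark is the classical argument, which you have correctly identified in outline: compute rank-one Borel--Weil--Bott on a fibre $\mathbb{P}^{1}\cong L_{\alpha}/B_{\alpha}$ (with the dot action entering through Serre duality, $\langle s_{\alpha}\cdot\lambda,\alpha\rangle=-\langle\lambda,\alpha\rangle-2$) and propagate it through a $\mathbb{P}^{1}$-fibration via Leray/projection formula. Your rank-one dictionary and the deductions of (3) and (4) are sound, and (2) is cleanest as (3) applied to $s_{\alpha}\cdot\lambda$, using $s_{\alpha}\cdot(s_{\alpha}\cdot\lambda)=\lambda$; note that your auxiliary identity ``$H^{0}(s_{\alpha},s_{\alpha}\cdot\lambda)=H^{1}(s_{\alpha},\lambda)$'' has the indices on the wrong sides --- when $\langle\lambda,\alpha\rangle\ge 0$ both of those groups vanish, and the useful rank-one identity is $H^{1}(s_{\alpha},s_{\alpha}\cdot\lambda)\cong H^{0}(s_{\alpha},\lambda)$.

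There are, however, two genuine problems. First, your fibration is on the wrong side: the multiplication map $P_{\alpha}\times_{B}X(\tau)\to G/B$ has image $P_{\alpha}X(\tau)$, which realises the \emph{left} decomposition $w=s_{\alpha}\tau$; for $w=\tau s_{\alpha}$ with $l(w)=l(\tau)+1$ the correct geometry is the projection $\pi:G/B\to G/P_{\alpha}$. Since $ws_{\alpha}<w$, one has $X(w)=\pi^{-1}\bigl(\pi(X(w))\bigr)$, so $\pi:X(w)\to Y:=\pi(X(w))$ is a $\mathbb{P}^{1}$-bundle, while $\tau$, being the minimal representative of the coset $wW_{\{\alpha\}}$, maps $X(\tau)$ \emph{birationally onto the base} $Y$. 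Second, and more seriously, your Leray argument computes $H^{j}(w,\lambda)$ in terms of cohomology on $Y$, whereas statement (1) is about $X(\tau)$: the identification $R^{0}\pi_{*}\mathcal{L}(\lambda)\cong\mathcal{L}\bigl(H^{0}(s_{\alpha},\lambda)\bigr)$ only gives $H^{j}(w,\lambda)=H^{j}\bigl(Y,\mathcal{L}(H^{0}(s_{\alpha},\lambda))\bigr)$. To replace $Y$ by $X(\tau)$ one needs $\sigma_{*}\mathcal{O}_{X(\tau)}=\mathcal{O}_{Y}$ and $R^{i}\sigma_{*}\mathcal{O}_{X(\tau)}=0$ for $i>0$, where $\sigma=\pi|_{X(\tau)}$; this is a normality/rational-resolution input about Schubert varieties (precisely the facts the paper itself invokes in Section 3, namely \cite[Lemma 3.3.3(b), Theorem 3.3.4(a)]{BK}), and it is not supplied by the projection formula or by degeneration bookkeeping. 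You gesture at this when you say the induction must ``close up,'' but you never discharge it, and without it part (1) --- and hence the inductive engine for the other parts --- does not follow from your sketch. With the fibration corrected as above and this input cited, the remainder of your argument goes through and agrees with the proof in the cited source.
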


Let $\pi:\hat{G}\longrightarrow G$ be the simply connected covering of $G$.  
Let  $\hat{L_{\alpha}}$  (respectively, $\hat{B_{\alpha}}$) be the inverse image 
of $L_{\alpha}$ (respectively, of $B_{\alpha}$) in $\hat{G}$. Note that $\hat{L_{\alpha}}/\hat{B_{\alpha}}$ is isomorphic to $L_{\alpha}/B_{\alpha}$. We make use of this isomorphism to use the same notation for the vector bundle on $L_{\alpha}/B_{\alpha}$ associated to a $\hat{B_{\alpha}}$-module. Let $V$ be an irreducible  $\hat{L_{\alpha}}$-module and $\lambda$ be a character of $\hat{B_{\alpha}}$. 

Then, we have

\begin{lem}\label{lem 3.1}
	\begin{enumerate}
		
		\item If $\langle \lambda , \alpha \rangle \geq 0$, then, the $\hat{L_{\alpha}}$-module
		$H^{0}(L_{\alpha}/B_{\alpha} , V\otimes \mathbb{C}_{\lambda})$ 
		is isomorphic to the tensor product of  $ ~ V$ and 
		$H^{0}(L_{\alpha}/B_{\alpha} , \mathbb{C}_{\lambda})$. Further, we have  
		$H^{j}(L_{\alpha}/B_{\alpha} , V\otimes \mathbb{C}_{\lambda}) =0$ 
		for every $j\geq 1$.
		\item If $\langle \lambda , \alpha \rangle  \leq -2$, then, we have  
		$H^{0}(L_{\alpha}/B_{\alpha} , V \otimes \mathbb{C}_{\lambda})=0.$ 
		Further, the $\hat{L_{\alpha}}$-module  $H^{1}(L_{\alpha}/B_{\alpha} , V \otimes \mathbb{C}_{\lambda})$ is isomorphic to the tensor product of  $V$ and $H^{0}(L_{\alpha}/B_{\alpha} , 
		\mathbb{C}_{s_{\alpha}\cdot\lambda})$. 
		\item If $\langle \lambda , \alpha \rangle = -1$, then 
		$H^{j}( L_{\alpha}/B_{\alpha} , V \otimes \mathbb{C}_{\lambda}) =0$ 
		for every $j\geq 0$.
	\end{enumerate}
\end{lem}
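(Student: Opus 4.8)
The plan is to separate the group-module factor $V$ from the line-bundle factor $\mathbb{C}_{\lambda}$ by means of the \emph{tensor identity}, thereby reducing the computation to the cohomology of line bundles on the rational curve $L_{\alpha}/B_{\alpha}$, where Lemma \ref{lem 2.1} applies verbatim. The key point is that $V$ is a module for the whole group $\hat{L_{\alpha}}$, and not merely for $\hat{B_{\alpha}}$, so that the vector bundle $\mathcal{L}(V)$ is $\hat{L_{\alpha}}$-equivariantly trivial.

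First I would make this triviality precise. The morphism
$$\hat{L_{\alpha}}\times_{\hat{B_{\alpha}}} V \longrightarrow (L_{\alpha}/B_{\alpha})\times V, \qquad [g,v]\longmapsto (g\hat{B_{\alpha}},\, g\cdot v),$$
is well defined, since $[gb,\, \phi(b^{-1})\cdot v]\mapsto (g\hat{B_{\alpha}},\, gb\cdot \phi(b^{-1})\cdot v)=(g\hat{B_{\alpha}},\, g\cdot v)$, and it is an isomorphism of vector bundles which is equivariant for the $\hat{L_{\alpha}}$-action $h\cdot(x,v)=(hx,\, h\cdot v)$ on the target. Thus $\mathcal{L}(V)$ is isomorphic, as an $\hat{L_{\alpha}}$-equivariant bundle, to the trivial bundle with fibre $V$ and diagonal action. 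Tensoring with $\mathcal{L}(\mathbb{C}_{\lambda})$ gives an $\hat{L_{\alpha}}$-equivariant isomorphism $\mathcal{L}(V\otimes\mathbb{C}_{\lambda})\cong V\otimes_{\mathbb{C}}\mathcal{L}(\mathbb{C}_{\lambda})$. Since $V$ is a finite-dimensional vector space, hence flat over $\mathbb{C}$, taking cohomology commutes with the factor $V$ and yields the isomorphism of $\hat{L_{\alpha}}$-modules
$$H^{j}(L_{\alpha}/B_{\alpha},\, V\otimes \mathbb{C}_{\lambda})\;\cong\; V\otimes_{\mathbb{C}} H^{j}(L_{\alpha}/B_{\alpha},\, \mathbb{C}_{\lambda})$$
for every $j\ge 0$, with $\hat{L_{\alpha}}$ acting diagonally.

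It then remains to evaluate the scalar cohomology $H^{j}(L_{\alpha}/B_{\alpha},\mathbb{C}_{\lambda})=H^{j}(s_{\alpha},\lambda)$ via Lemma \ref{lem 2.1} applied with $w=s_{\alpha}$ and $\tau=e$, using $L_{\alpha}/B_{\alpha}\cong P_{\alpha}/B=X(s_{\alpha})$ and the fact that $X(s_{\alpha})$ is one-dimensional, so that only $H^{0}$ and $H^{1}$ can be nonzero. If $\langle\lambda,\alpha\rangle\ge 0$, then part (1) gives $H^{j}(s_{\alpha},\lambda)=H^{j}(e,H^{0}(s_{\alpha},\lambda))$, which vanishes for $j\ge 1$ since $X(e)$ is a point; substituting into the displayed isomorphism yields statement (1). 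If $\langle\lambda,\alpha\rangle\le -2$, then $H^{0}(s_{\alpha},\lambda)=0$ and part (3) gives $H^{1}(s_{\alpha},\lambda)=H^{0}(s_{\alpha},s_{\alpha}\cdot\lambda)$, which produces statement (2). If $\langle\lambda,\alpha\rangle=-1$, then part (4) gives $H^{j}(s_{\alpha},\lambda)=0$ for all $j$, whence statement (3).

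The only genuine step is the tensor identity of the second paragraph; once the $\hat{L_{\alpha}}$-equivariant trivialization of $\mathcal{L}(V)$ is in hand, the three cases are immediate specializations of Demazure's Lemma \ref{lem 2.1}. The point to watch is the compatibility of module structures: one must verify that the displayed isomorphism is $\hat{L_{\alpha}}$-linear for the diagonal action, so that the decomposition holds at the level of $\hat{L_{\alpha}}$-modules and not merely of vector spaces. This is exactly what the equivariance of the trivialization guarantees, and it is what makes the isomorphisms asserted in statements (1) and (2) isomorphisms of $\hat{L_{\alpha}}$-modules.
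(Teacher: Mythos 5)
Your proposal is correct and takes essentially the same route as the paper: both rest on the tensor identity $H^{j}(L_{\alpha}/B_{\alpha}, V\otimes \mathbb{C}_{\lambda})\simeq V\otimes H^{j}(L_{\alpha}/B_{\alpha}, \mathbb{C}_{\lambda})$ as $\hat{L_{\alpha}}$-modules, followed by Lemma \ref{lem 2.1} applied with $w=s_{\alpha}$ and the identification $L_{\alpha}/B_{\alpha}\simeq P_{\alpha}/B$. The only difference is that the paper obtains the tensor identity by citing Jantzen (I, Propositions 4.8 and 5.12), whereas you prove it directly via the $\hat{L_{\alpha}}$-equivariant trivialization of $\mathcal{L}(V)$, which is the standard argument behind that citation.
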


\begin{proof}
	
	By \cite[I, Proposition 4.8, p.53]{Jan} and \cite[I, Proposition 5.12, p.77]{Jan} for $j\ge0$,
	we have the following isomorphism as 
	$\hat{L_{\alpha}}$-modules:
	$$H^j(L_{\alpha}/B_{\alpha}, V \otimes \mathbb C_{\lambda})\simeq V \otimes
	H^j(L_{\alpha}/B_{\alpha}, \mathbb C_{\lambda}).$$ 
	Now, the proof of the lemma follows from Lemma \ref{lem 2.1} by taking $w=s_{\alpha}$ 
	and the fact that $L_{\alpha}/B_{\alpha} \simeq P_{\alpha}/B$.
\end{proof}

We now state the following Lemma on indecomposable 
$\hat{B_{\alpha}}$ (respectively, $B_{\alpha}$) modules which will be used in computing 
the cohomology modules (see \cite [Corollary 9.1, p.30]{BKS}).

\begin{lem}\label{lem 3.2}
	\begin{enumerate}
		\item
		Any finite dimensional indecomposable $\hat{B_{\alpha}}$-module $V$ is isomorphic to 
		$V^{\prime}\otimes \mathbb{C}_{\lambda}$ for some irreducible representation
		$V^{\prime}$ of $\hat{L_{\alpha}}$, and some character $\lambda$ of $\hat{B_{\alpha}}$.
		
		\item
		Any finite dimensional indecomposable $B_{\alpha}$-module $V$ is isomorphic to 
		$V^{\prime}\otimes \mathbb{C}_{\lambda}$ for some irreducible representation
		$V^{\prime}$ of $\hat{L_{\alpha}}$, and some character $\lambda$ of $\hat{B_{\alpha}}$.
	\end{enumerate}
\end{lem}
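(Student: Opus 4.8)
The plan is to describe a finite-dimensional $\hat{B_{\alpha}}$-module purely in terms of linear algebra and then recognise the indecomposables via the representation theory of a type $A$ quiver. First I would record the group structure. Since $\hat{L_{\alpha}}$ is reductive of semisimple rank one, its Borel subgroup factors as $\hat{B_{\alpha}}=\hat{T}\cdot U_{-\alpha}$, where $\hat{T}$ is the inverse image of $T$ in $\hat{G}$ and $U_{-\alpha}\cong\mathbb{G}_{a}$ is the one-dimensional root subgroup attached to $-\alpha$ (the root of $B$). Restricting a finite-dimensional $\hat{B_{\alpha}}$-module $V$ to $\hat{T}$ gives a weight decomposition $V=\bigoplus_{\mu}V_{\mu}$ indexed by characters $\mu$ of $\hat{T}$. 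Because we work over $\mathbb{C}$, the action of $U_{-\alpha}$ is the exponential of a single nilpotent operator $N$ (its Lie algebra generator $X_{-\alpha}$), and the relation $t\,u(s)\,t^{-1}=u((-\alpha)(t)\,s)$ forces $N(V_{\mu})\subseteq V_{\mu-\alpha}$. Thus the data of a $\hat{B_{\alpha}}$-module is exactly that of an $X(\hat{T})$-graded vector space equipped with a nilpotent operator $N$ of degree $-\alpha$.

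Next I would split the grading modulo $\mathbb{Z}\alpha$. Writing $V=\bigoplus_{c}V^{(c)}$, where $c$ runs over cosets in $X(\hat{T})/\mathbb{Z}\alpha$ and $V^{(c)}=\bigoplus_{\mu\in c}V_{\mu}$, each summand $V^{(c)}$ is stable under both $\hat{T}$ and $N$, hence under $\hat{B_{\alpha}}$. So if $V$ is indecomposable all of its weights lie in a single coset $\mu_{0}+\mathbb{Z}\alpha$, and $V$ becomes a finite-dimensional representation of the equioriented type $A$ quiver $\cdots\to V_{\mu_{0}+\alpha}\to V_{\mu_{0}}\to V_{\mu_{0}-\alpha}\to\cdots$, all arrows being given by $N$. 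By Gabriel's classification of the indecomposable representations of a type $A$ quiver, every indecomposable is an interval (string) module: the nonzero weight spaces are one-dimensional and occupy a contiguous block $\mu_{0}+a\alpha,\ldots,\mu_{0}+b\alpha$, with $N$ acting as an isomorphism between consecutive weight spaces and as zero at the top. This is the technical heart of the argument, and it is what reduces the module to a single string.

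It then remains to realise each string as $V^{\prime}\otimes\mathbb{C}_{\lambda}$. Set $m=b-a$. The restriction to $\hat{B_{\alpha}}$ of any irreducible $\hat{L_{\alpha}}$-module is again an $\alpha$-string of one-dimensional weight spaces with $N$ identifying consecutive ones, of length one more than $|\langle\nu,\alpha\rangle|$ for its extreme weight $\nu$; and because $\hat{G}$ is simply connected, so that $\omega_{\alpha}\in X(\hat{T})$, there is such an irreducible $V^{\prime}$ whose string has length exactly $m+1$. Choosing a character $\lambda$ of $\hat{B_{\alpha}}$ that translates the weights of $V^{\prime}$ onto the block $\mu_{0}+a\alpha,\ldots,\mu_{0}+b\alpha$, the module $V^{\prime}\otimes\mathbb{C}_{\lambda}$ has the same graded dimensions and the same action of $N$ as $V$, hence is isomorphic to $V$. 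This proves $(1)$. For $(2)$, I would pull a $B_{\alpha}$-module back along the natural surjection $\hat{B_{\alpha}}\to B_{\alpha}$; this changes neither the underlying space nor the lattice of submodules, so indecomposability is preserved and $(1)$ applies verbatim to give the same conclusion.

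The main obstacle I anticipate is the faithful translation into quiver language: one must verify that the unipotent part of the action is captured by the single nilpotent $N$ of degree $-\alpha$ (this is where characteristic zero enters) and, more importantly, that indecomposability of $V$ as a $\hat{B_{\alpha}}$-module coincides with indecomposability as a representation of the $A$-type quiver, so that Gabriel's theorem may legitimately be invoked. A secondary point needing care is the existence of an $\hat{L_{\alpha}}$-irreducible with the prescribed $\alpha$-string length, which is precisely where the passage to the simply connected cover $\hat{G}$ is used.
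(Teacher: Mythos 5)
Your proof is correct, but it takes a genuinely different route from the paper for part (1). The paper does not argue at all: it simply cites \cite[Corollary 9.1, p.30]{BKS} for the classification of finite dimensional indecomposable $\hat{B_{\alpha}}$-modules, and then, exactly as you do, deduces part (2) by pulling back along the natural surjection $\hat{B_{\alpha}}\rightarrow B_{\alpha}$ (which changes neither the underlying space nor the submodule lattice). What you have written is in effect a self-contained characteristic-zero proof of the cited result: you identify a rational $\hat{B_{\alpha}}$-module with an $X(\hat{T})$-graded space carrying a nilpotent operator $N$ of degree $-\alpha$ (using that a finite dimensional rational $\mathbb{G}_{a}$-representation over $\mathbb{C}$ is $\exp(sN)$ for a single nilpotent $N$ --- this is where characteristic zero enters), split off the coset decomposition modulo $\mathbb{Z}\alpha$, invoke Gabriel's theorem for the equioriented type $A$ quiver to get string modules, and then match each string with $V^{\prime}\otimes\mathbb{C}_{\lambda}$ using the $\mathfrak{sl}_{2}$ weight-string structure of irreducible $\hat{L_{\alpha}}$-modules; your observation that $\omega_{\alpha}\in X(\hat{T})$ (simple connectedness of $\hat{G}$, hence of the derived group of $\hat{L_{\alpha}}$) is precisely the point that makes a string of every length realizable, and explains why the statement of part (2) must still pass through $\hat{L_{\alpha}}$ and $\hat{B_{\alpha}}$ rather than $L_{\alpha}$ and $B_{\alpha}$. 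The trade-off: the paper's citation is shorter and rests on a result proved in \cite{BKS}, while your argument is longer but transparent about exactly which hypotheses (characteristic zero, simply connected cover) are used, and all the steps you flag as delicate --- the equivalence between $\hat{B_{\alpha}}$-module morphisms and graded maps commuting with $N$, hence the agreement of the two notions of indecomposability, and the existence of the irreducible of prescribed string length --- do check out.
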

\begin{proof}
	Proof of part (1) follows from \cite [Corollary 9.1, p.30]{BKS}.
	
	Proof of part (2) follows from the fact that every $B_{\alpha}$-module can be viewed  as a 
	$\hat{B_{\alpha}}$-module via the natural homomorphism. 	
\end{proof}

\section{Proof of theorem \ref {thm 2.1} in three left cases}
To complete the proof of Theorem \ref {thm 2.1}, it is sufficient to prove the following proposition.  By $(G, P)$ we mean $G$ is a simple algebraic group of adjoint type over $\mathbb{C}$ and $P$ is a parabolic subgroup of $G$ containing $B.$ 

\begin{prop}\label{BCG}
	Let $(G, P)$ be one of the following types.
	\begin{itemize}
		\item[(1)] $G$ is of type $B_{n}$ and $P=P_{\alpha_{n}}$ is the minimal parabolic subgroup of $G$ corresponding to 
		$\alpha_{n}.$
		\item[(2)] $G$ is of type $C_{n}$ and $P=P_{\alpha_{1}}$ is the minimal parabolic subgroup of $G$ corresponding to 
		$\alpha_{1}.$
		\item[(3)] $G$ is of type $G_{2}$ and $P=P_{\alpha_{1}}$ is the minimal parabolic subgroup of $G$ corresponding to 
		$\alpha_{1}.$
	\end{itemize} Then, there exists an element $w\in W$ such that $P=Aut^{0}(X(w))$.
\end{prop}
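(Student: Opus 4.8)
The plan is to keep, in each of the three cases, the very Schubert variety that was used in the general argument, and to replace the step that failed (the appeal to $Aut^{0}(G/P')=G$) by a direct cohomological computation. Write $I=\{\alpha\}$ for the single simple root defining the minimal parabolic ($\alpha=\alpha_{n}$ in type $B_{n}$, and $\alpha=\alpha_{1}$ in types $C_{n}$ and $G_{2}$), put $J=S\setminus I$, and take $w=(w_{0}^{J})^{-1}$. Exactly as in the proof of Theorem \ref{thm 2.1}, one has $R^{+}(w^{-1})\cap S=I$, so that $P=P_{\alpha}$ is the stabiliser of $X(w)$, the natural map $\phi_{w}\colon P\to Aut^{0}(X(w))$ is injective because $w^{-1}(\alpha_{0})<0$ (see \cite[Theorem 4.2(2), p.772]{Ka}), and the birational morphism $\pi\colon X(w)\to G/P'$ (here $w_{0}=-\mathrm{id}$, so $P'=P_{\hat{\alpha}}$) induces an injection $\pi_{*}\colon Aut^{0}(X(w))\hookrightarrow Aut^{0}(G/P')$. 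The single reason the earlier reasoning breaks down is that now $(G,P')$ is one of Akhiezer's exceptional pairs, so that $Aut^{0}(G/P')$ is strictly larger than $G$ (respectively $PSO_{2n+2}$, $PGL_{2n}$, and $SO_{7}$ in the three cases). Hence it no longer suffices to invoke $Aut^{0}(G/P')=G$, and I must instead show directly that the chain $P\subseteq Aut^{0}(X(w))\subseteq Aut^{0}(G/P')$ collapses at the first inclusion.

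I would carry this out on Lie algebras. Since $\mathrm{Lie}(Aut^{0}(X(w)))=H^{0}(X(w),T_{X(w)})$, where $T_{X(w)}$ denotes the tangent (derivation) sheaf, the assertion $P=Aut^{0}(X(w))$ is equivalent to $\dim_{\mathbb{C}}H^{0}(X(w),T_{X(w)})=\dim P$. The inclusions above already give $\mathfrak{p}\hookrightarrow H^{0}(X(w),T_{X(w)})$ together with an embedding of the latter into $\mathrm{Lie}(Aut^{0}(G/P'))$, where $\mathfrak{p}=\mathrm{Lie}(P)$, so everything reduces to an upper bound: $X(w)$ must be shown to carry no global vector field beyond those coming from $\mathfrak{p}$. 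To compute $H^{0}(X(w),T_{X(w)})$ I would fix a reduced expression $w=s_{\beta_{1}}\cdots s_{\beta_{r}}$ and reduce the tangent sheaf, through the standard filtrations attached to this word, to bundles $\mathcal{L}(V)$ for indecomposable $B_{\beta_{i}}$-modules $V$. Writing each such $V$ as $V'\otimes\mathbb{C}_{\lambda}$ by Lemma \ref{lem 3.2}, the relevant relative cohomology is governed by Lemma \ref{lem 3.1}, and running this one reflection at a time down the word — that is, applying Demazure's Lemma \ref{lem 2.1} factor by factor — assembles the global sections and computes their dimension. Since $w_{0}=-\mathrm{id}$ and $P'$ is the unique Akhiezer exceptional maximal parabolic in all three types, the same scheme applies uniformly, only the root-system data changing.

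The main obstacle, and the heart of the proof, is the final vanishing. Because $\dim\mathrm{Lie}(Aut^{0}(G/P'))$ vastly exceeds $\dim\mathfrak{p}$ in each case, the computation must show that every ``extra'' infinitesimal automorphism — both those spanning $\mathrm{Lie}(Aut^{0}(G/P'))/\mathrm{Lie}(G)$ and everything in $\mathrm{Lie}(G)$ lying outside $\mathfrak{p}$ — fails to extend to a regular vector field on $X(w)$. Geometrically, such a field is transported by $\pi^{-1}$ only over the open locus on which $\pi$ is an isomorphism, and the claim is that it acquires poles along the exceptional locus of $\pi$, hence does not extend across the boundary of $X(w)$; cohomologically this is precisely the vanishing forced by the negative-pairing cases (3) and (4) of Lemma \ref{lem 2.1} at the appropriate reflections $s_{\beta_{i}}$. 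I expect the delicate point to be the bookkeeping that makes this happen: choosing the reduced expression of $w$ so that at each stage the pairing $\langle\lambda,\beta_{i}\rangle$ falls into the vanishing range $\langle\lambda,\beta_{i}\rangle=-1$ or $\langle\lambda,\beta_{i}\rangle\le-2$, so that no spurious global section survives and the dimension count returns exactly $\dim\mathfrak{p}$, giving $P=Aut^{0}(X(w))$.
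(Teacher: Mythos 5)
There is a genuine gap, and it sits exactly where you defer to future ``bookkeeping''. You fix $w=(w_{0}^{J})^{-1}$, the element from the general argument, and leave the decisive claim --- that the cohomology computation returns exactly $\dim P$ --- unverified. The paper does \emph{not} use this $w$ in the three exceptional cases: it discards $(w_{0}^{J})^{-1}$ and hand-picks a shorter element in each case ($w=v_{1}v_{2}\cdots v_{n-1}$ with $v_{r}=s_{n}s_{n-1}\cdots s_{r}$ in type $B_{n}$; $w=s_{1}s_{2}\cdots s_{n}$ in type $C_{n}$; $w=s_{1}s_{2}s_{1}s_{2}$ in type $G_{2}$), chosen precisely so that (i) $P$ is the stabiliser of $X(w)$, (ii) $w^{-1}(\alpha_{0})<0$, and (iii) $H^{0}(w,\mathfrak{g}/\mathfrak{b})=\mathfrak{g}$ can all be verified. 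For your $w$ the analogue of (iii) is doubtful and the conclusion itself may fail. In type $G_{2}$ one has $(w_{0}^{J})^{-1}=w_{0}s_{2}=s_{1}s_{2}s_{1}s_{2}s_{1}$, so $X(w)$ is a Schubert divisor of $G/B$, and one checks that $\pi\colon X(w)\to Q^{5}=G/P_{\hat{\alpha}_{1}}$ is an isomorphism over the cells of dimensions $5$ and $4$ but has $\mathbb{P}^{1}$-fibres over the three-dimensional Schubert subvariety $Z$; since $Z$ is also a Schubert variety for $Aut^{0}(Q^{5})=SO_{7}$, the fields in $\mathfrak{so}_{7}$ tangent to $Z$ form a parabolic subalgebra of dimension at least $12>9=\dim P_{\alpha_{1}}$, and these are natural candidates to lift to $X(w)$; nothing in your plan excludes them, and if they lift then $P=Aut^{0}(X(w))$ is simply false for your choice of $w$. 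Note also that your proposed escape --- choosing the reduced expression so that the pairings $\langle\lambda,\beta_{i}\rangle$ fall into the vanishing range --- is illusory: $H^{0}(w,\mathfrak{g}/\mathfrak{b})$ and $H^{0}(X(w),T_{X(w)})$ are intrinsic to $w$, so every reduced word yields the same module; the only genuine freedom is the choice of $w$ itself, which you have frozen.

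Independently of the choice of $w$, your announced dimension count cannot close. Since $X(w)$ may be singular, $T_{X(w)}$ is not of the form $\mathcal{L}(V)$, so Lemma \ref{lem 2.1}, Lemma \ref{lem 3.1} and Lemma \ref{lem 3.2} do not compute $H^{0}(X(w),T_{X(w)})$ directly; what they compute is the upper bound $H^{0}(X(w),T_{G/B}\vert_{X(w)})=H^{0}(w,\mathfrak{g}/\mathfrak{b})$. But this bound can never be as small as $\dim P$: by the injectivity of $\phi_{w}$ one has $\mathfrak{p}\subseteq H^{0}(X(w),T_{X(w)})\subseteq H^{0}(w,\mathfrak{g}/\mathfrak{b})$, and by the ``note'' in the paper's proof of Proposition \ref{BCG} the bound also contains every positive root line and the coweight lines; comparing $T$-weights ($\mathfrak{p}$ contributes the negative root lines and the zero weights, the note contributes the positive root lines) forces $\dim H^{0}(w,\mathfrak{g}/\mathfrak{b})\geq\dim\mathfrak{g}$. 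Relatedly, your pole-acquisition picture is aimed at the wrong target: the restriction to $X(w)$ of a field in $\mathfrak{g}\setminus\mathfrak{p}$ is a perfectly regular section of $T_{G/B}\vert_{X(w)}$ --- it acquires no poles; it merely fails to be \emph{tangent} to $X(w)$, and that failure is invisible to the vanishing cases of Lemma \ref{lem 2.1}. The paper bridges this with a step your plan lacks: prove the bound is \emph{exactly} $\mathfrak{g}$ (equivalently, each negative root weight space is one-dimensional --- this is the entire content of Cases 1--3), deduce $\mathrm{Lie}(Aut^{0}(X(w)))\subseteq\mathfrak{g}$, so that $Aut^{0}(X(w))$ is a closed subgroup of $G$ stabilising $X(w)$, hence contained in $P_{w}=P$, and conclude equality from the injectivity of $\phi_{w}$. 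To repair your proposal you must both switch to the paper's tailored elements $w$ and replace the target ``$\dim H^{0}(X(w),T_{X(w)})=\dim P$ by direct computation'' with this two-step argument.
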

\begin{proof}
	
	Let $T_{X(w)}$ be the tangent sheaf of $X(w).$ Let $T_{G/B}$ be the restriction of the tangent bundle to $X(w).$ Then $T_{X(w)}$ is a subsheaf of $T_{G/B}$ on $X(w).$ By \cite[Lemma 3.4, p.13]{MO} we have
	$Lie(Aut^0(X(w))=H^{0}(X(w), T_{X(w)})\subset H^0(X(w), T_{G/B})=H^{0}(w, \mathfrak{g}/\mathfrak{b}).$
	
	As in the strategy of proof in Section 3, it is sufficient to prove that for all the three types $(G, P)$ as above, there is an element $w\in W$ such that 
	
	\begin{itemize} 
		\item[(i)] $P$ is the stabiliser of $X(w)$ in $G.$ 
		\item [(ii)] $w^{-1}(\alpha_{0}) < 0.$ 
		\item [(iii)] $H^{0}(w, \mathfrak{g}/\mathfrak{b})=\mathfrak{g}.$  
	\end{itemize}
	
	For instance, let $\phi_{w}:P\longrightarrow Aut^0(X(w))$ be the natural homomorphism induced by the action of $P$ on $X(w).$
	
	Since $w^{-1}(\alpha_{0})<0,$ $\phi_{w}: P\longrightarrow Aut^0(X(w))$ is injective. Since $H^0(w, \mathfrak{g/b})=\mathfrak{g},$ we have $H^0(X(w), T_{X(w)})\subseteq \mathfrak{g}.$ Therefore $Aut^0(X(w))$ is a closed subgroup of $G$ containing $P.$ Since $P$ is the stabilizer of $X(w)$ in $G,$ we have $P=Aut^0(X(w)).$
	
	We first make a note about statement (ii) and statement (iii). Let  $w\in W$  be such that  $w^{-1}(\alpha_{0}) < 0.$  To prove that $H^{0}(w, \mathfrak{g}/\mathfrak{b})=\mathfrak{g},$ it is sufficient     
	to prove that for any negative root $\beta,$ the dimension of the weight space $H^{0}(w, \mathfrak{g}/\mathfrak{b})_{\beta}$ is one.
	
	Proof of this note: 
	
	The restriction of the natural map $\mathfrak{g}\longrightarrow \mathfrak{g}/\mathfrak{b}$ to 
	$\bigoplus\limits_{\alpha\in R^{+}}^{}\mathfrak{g}_{\alpha}$ is an isomorphism of $T$-modules and hence, we have $\mathfrak{g}/\mathfrak{b}=\bigoplus\limits_{\alpha\in R^{+}}\mathbb{C}_{\alpha}.$ Since $s_{i}$ permutes all positive roots other than $\alpha_{i}$ for every $1\leq i \leq n,$ 
	every indecomposable $B_{\alpha_{i}}$-summand $V$ of $\mathfrak{g}/\mathfrak{b}$
	with highest weight, a positive root different from $\alpha_{i}$ is indeed an $\hat{L}_{\alpha_{i}}$-module 
	and hence for every $\alpha\in R^{+}\setminus S,$ the dimension of the weight space $H^{0}(s_{i}, \mathfrak{g}/\mathfrak{b})_{\alpha}$ is one. Using this argument and by induction on length of $w$ 
	we see that  the dimension of the weight space $H^{0}(w, \mathfrak{g}/\mathfrak{b})_{\alpha}$ is one 
	for every $\alpha\in R^{+}\setminus S.$ Further, since $(\mathfrak{g}/\mathfrak{b})_{\alpha}$ is one dimensional for every simple root $\alpha,$ each fundamental coweight $h(\alpha_{i})$ ($1\leq i \leq n$) appears exactly once.  Hence, it is sufficient to prove that  for any negative root $\beta$
	the dimension of the weight space $H^{0}(w, \mathfrak{g}/\mathfrak{b})_{\beta}$ is one.

We prove the existence of an element $w\in W $ satisfying the first two conditions and 
that the dimension of the weight space $H^{0}(w, \mathfrak{g}/\mathfrak{b})_{\beta}$ is one 
for any negative root $\beta$ in all the three cases separately.

\vspace{.2cm}
Case 1: Assume that $G$ is of type $B_{n}$ and $P=P_{n}.$ For every $1\leq r \leq n-1,$ let $v_{r}=s_{n}s_{n-1}\cdots s_{r}.$ Take $w=v_{1}v_{2}\cdots v_{n-1}.$  It is easy to see that $P_{n}$ is the stabiliser of $X(w).$ 

In this case $\alpha_{0}=\omega_{2}.$  So, we have $v_{1}^{-1}(\alpha_{0})=\alpha_{2}+2(\sum\limits_{i=3}^{n}\alpha_{i}).$ This is the highest root of type $B_{n-1}$ corresponding to the root system whose set of simple roots is $S\setminus \{\alpha_{1}\}.$   By induction on rank of $G$, we have $w^{-1}(\alpha_{0})=(v_{2}\cdots v_{n-1})^{-1} (\alpha_{2}+2(\sum\limits_{i=3}^{n}\alpha_{i})) < 0.$ 

Now, if $v\in W$ is of minimal length such that the dimension of $H^{0}(v, \mathfrak{g}/\mathfrak{b})_{\beta}$ is at least two for some negative root $\beta,$ then $\beta=-(\sum\limits_{j=i}^{n}\alpha_{j})$ for some $1\leq i \leq n-1$.

Justification of the above statement:
Clearly for any such $v$, $l(v)>1.$ Choose $\gamma \in S$ such that
$l(s_{\gamma}v)=l(v)-1.$ Let $u=s_{\gamma}v.$ 

Then we have dim$H^0(s_{\gamma}, H^0(u, \mathfrak{g}/\mathfrak{b}))_{\beta}\ge 2.$

If $\langle \beta , \gamma \rangle=1$, then there exists an indecomposable $B_{\gamma}$-summand $V$ of $H^0(u, \mathfrak{g/b})$ such that $H^0(u, V)_{\beta}\neq0.$
In this case, either $V=\mathbb{C}_{\beta}\oplus \mathbb{C}_{\beta -\gamma}$ or $V=\mathbb{C}_{\beta}.$

So we have   dim$H^0(s_{\gamma}, H^0(u, \mathfrak{g}/\mathfrak{b}))_{\beta}=1.$

If  $\langle \beta , \gamma \rangle=-1$, we have, either $V=\mathbb{C}_{\beta}\oplus \mathbb{C}_{\beta +\gamma}$ or $V=\mathbb{C}_{\beta + \gamma}.$

So we have
dim$H^0(s_{\gamma}, H^0(u, \mathfrak{g}/\mathfrak{b}))_{\beta}=1.$

If $\langle \beta , \gamma \rangle=2$ then there exists a unique indecomposable $B_{\gamma}$-summand $V$ of $H^0(u, \mathfrak{g/b})$ with highest weght $\beta.$

Therefore    dim$H^0(s_{\gamma}, H^0(u, \mathfrak{g}/\mathfrak{b}))_{\beta}=1.$

If  $\langle \beta , \gamma \rangle=-2$ then there exists a unique indecomposable $B_{\gamma}$-summand of $H^0(u, \mathfrak{g/b})$ with highest weight $\beta + 2\gamma.$ Therefore    dim$H^0(s_{\gamma}, H^0(u, \mathfrak{g}/\mathfrak{b}))_{\beta}=1.$

Following the case by case analysis as above, we conclude that $\langle \beta , \gamma  \rangle=0$ and there is a unique indecomposable $B_{\gamma}$-summand $V$ of $H^0(u, \mathfrak{g/b})$ such that $V=\mathbb{C}_{\beta +\gamma}\oplus \mathbb{C}_{\beta}.$
In particular, we have $\beta + \gamma\in R^-.$  Since $G$ is of type $B_{n}$, we have $\gamma=\alpha_{n}$ and $\beta=-(\sum\limits_{j=i}^{n}\alpha_{j})$ for some $1\le i\le n-1.$  

By induction on the rank of $G,$ we may assume that  
$H^{0}(v_{2}v_{3}\cdots v_{n-1}, \mathfrak{g}/\mathfrak{b})_{-(\sum\limits_{j=i}^{n}\alpha_{j})}$ is one dimensional for every $2\leq i \leq n-1.$  Also $H^{0}(v_{2}v_{3}\cdots v_{n-1}, \mathfrak{g}/\mathfrak{b})_{-(\sum\limits_{j=1}^{n}\alpha_{j})}=0.$ 

Since $\langle \sum\limits_{j=i}^{n}\alpha_{j} , \alpha_{1} \rangle = 0$ for every $3\leq i \leq n-1,$  the restriction of the evaluation map

$H^{0}(s_{1}v_{2}v_{3}\cdots v_{n-1}, \mathfrak{g}/\mathfrak{b})_{-(\sum\limits_{j=i}^{n}\alpha_{j})} \longrightarrow H^{0}(v_{2}v_{3}\cdots v_{n-1}, \mathfrak{g}/\mathfrak{b})_{-(\sum\limits_{j=i}^{n}\alpha_{j})}$

is an isomorphism for every $3\leq i \leq n-1$ (see Lemma \ref {lem 2.1} and Lemma \ref{lem 3.1}).

Since $\langle -( \sum\limits_{j=2}^{n}\alpha_{j}), \alpha_{1} \rangle = 1,$ we have 

$H^{0}(s_{1}v_{2}v_{3}\cdots v_{n-1}, \mathfrak{g}/\mathfrak{b})_{-(\sum\limits_{j=i}^{n}\alpha_{j})}=H^{0}(s_{1}, H^{0}(v_{2}v_{3}\cdots v_{n-1}, \mathfrak{g}/\mathfrak{b}))_{-(\sum\limits_{j=i}^{n}\alpha_{j})}$

is one dimensional  for every $i=1,2$ (see Lemma \ref {lem 2.1} and Lemma \ref{lem 3.1}).

Now, it is easy to see that for every  $2\leq r \leq n$ the evaluation map

$H^{0}(s_{r}s_{r-1}\cdots s_{2}, H^{0}(s_{1}v_{2}v_{3}\cdots v_{n-1}, \mathfrak{g}/\mathfrak{b}))_{-(\sum\limits_{j=i}^{n}\alpha_{j})}\longrightarrow  H^{0}(s_{1}v_{2}v_{3}\cdots v_{n-1}, \mathfrak{g}/\mathfrak{b})_{-(\sum\limits_{j=i}^{n}\alpha_{j})}$

is an isomorphism for every $1\leq i \leq n$ by induction on $r$ and using Lemma \ref {lem 2.1}, Lemma \ref{lem 3.1}. Thus, the space $H^{0}(w, \mathfrak{g}/\mathfrak{b})_{\alpha}$ is one dimensional for every negative root $\alpha.$

\vspace{.2cm}
Case 2: Assume that $G$ is of type $C_{n}$ ( $n\geq 3$ ) and $P=P_{1}.$  Take $w=s_{1}s_{2}\cdots s_{n}.$ In this case we have $\alpha_{0}=2\omega_{1},$ and $w^{-1}(\alpha_{0})=-\alpha_{n}.$
Further, the stabiliser of $X(w)$ in $G$ is $P_{1}.$ 

First note that 

$H^{0}(s_{n}, \mathfrak{g}/\mathfrak{b})=(\bigoplus\limits_{\alpha\in R^{+}}\mathbb{C}_{\alpha})
\oplus \mathbb{C}_{h(\alpha_{n})}\oplus \mathbb{C}_{-\alpha_{n}}$ (see Lemma \ref {lem 2.1} and Lemma \ref{lem 3.1}).  

Further, we have 

$H^{0}(s_{n-1}s_{n}, \mathfrak{g}/\mathfrak{b})=H^{0}(s_{n-1} , H^{0}(s_{n}, \mathfrak{g}/\mathfrak{b}))
=(\bigoplus\limits_{\alpha\in R^{+}}\mathbb{C}_{\alpha})\oplus \mathbb{C}_{h(\alpha_{n})}\oplus \mathbb{C}_{-\alpha_{n}}\oplus \mathbb{C}_{h(\alpha_{n-1})}$ \\ $\oplus \mathbb{C}_{-\alpha_{n-1}}
\oplus \mathbb{C}_{-(\alpha_{n-1}+\alpha_{n})}\oplus \mathbb{C}_{-(2\alpha_{n-1}+\alpha_{n})}$ (see Lemma \ref {lem 2.1} and Lemma \ref{lem 3.1}).

By using Lemma \ref {lem 2.1}, Lemma \ref{lem 3.1} and the descending induction on $1\leq r \leq n-1,$  we see that

$$H^{0}(s_{r}\cdots s_{n-1} s_{n}, \mathfrak{g}/\mathfrak{b})
=(\bigoplus\limits_{\alpha\in R^{+}}\mathbb{C}_{\alpha})\oplus (\bigoplus_{i=r}^{n}\mathbb{C}_{h(\alpha_{i})})\oplus \mathbb{C}_{-\mu},$$ where $\mu$ runs over all positive roots in $\sum_{i=r}^{n}\mathbb{Z}_{\geq 0}\alpha_{i}.$  Thus, we have $H^{0}(w, \mathfrak{g}/\mathfrak{b})=\mathfrak{g}.$

\vspace{.2cm}
Case 3:  Assume that $G$ is of type $G_{2}$ and $P=P_{1}.$ Take $w=s_{1}s_{2}s_{1}s_{2}.$ 
Here, we follow the convention in \cite{Hum1}.  In this case, we have $\alpha_{0}=3\alpha_{1}+2\alpha_{2}.$ Further, $w^{-1}(\alpha_{0})=-\alpha_{2}.$   

First note that $H^{0}(s_{2}, \mathfrak{g}/\mathfrak{b})=(\bigoplus\limits_{\alpha\in R^{+}}\mathbb{C}_{\alpha})
\oplus \mathbb{C}_{h(\alpha_{2})}\oplus \mathbb{C}_{-\alpha_{2}}$ (see Lemma \ref {lem 2.1} and Lemma \ref{lem 3.1}). 

$H^{0}(s_{1},  H^{0}( s_{2}, \mathfrak{g}/\mathfrak{b}))
=(\bigoplus\limits_{\alpha\in R^{+}}\mathbb{C}_{\alpha})
\oplus \mathbb{C}_{h(\alpha_{2})}\oplus \mathbb{C}_{-\alpha_{2}} \oplus \mathbb{C}_{h(\alpha_{1})}\oplus \mathbb{C}_{-\alpha_{1}}\oplus (\bigoplus\limits_{i=1}^{3}\mathbb{C}_{-(\alpha_{2} + i\alpha_{1})})$ (see Lemma \ref {lem 2.1} and Lemma \ref{lem 3.1}). 

Therefore we have

$H^{0}(s_{1}s_{2}, \mathfrak{g}/\mathfrak{b})=(\bigoplus\limits_{\alpha\in R^{+}}\mathbb{C}_{\alpha})
\oplus \mathbb{C}_{h(\alpha_{2})}\oplus \mathbb{C}_{-\alpha_{2}} \oplus \mathbb{C}_{h(\alpha_{1})}\oplus \mathbb{C}_{-\alpha_{1}}\oplus (\bigoplus\limits_{i=1}^{3}\mathbb{C}_{-(\alpha_{2} + i\alpha_{1})}).$

$H^{0}(s_{2} , H^{0}(s_{1}s_{2}, \mathfrak{g}/\mathfrak{b}))=(\bigoplus\limits_{\alpha\in R^{+}}\mathbb{C}_{\alpha})
\oplus \mathbb{C}_{h(\alpha_{2})}\oplus \mathbb{C}_{-\alpha_{2}} \oplus \mathbb{C}_{h(\alpha_{1})}\oplus \mathbb{C}_{-\alpha_{1}}\oplus (\bigoplus\limits_{i=1}^{3}\mathbb{C}_{-(\alpha_{2} + i\alpha_{1})})\oplus \mathbb{C}_{-(3\alpha_{1}+2\alpha_{2})}=\mathfrak{g}$ (see Lemma \ref {lem 2.1} and Lemma \ref{lem 3.1}). 

Therefore we have

$H^{0}(s_{2}s_{1}s_{2}, \mathfrak{g}/\mathfrak{b})=
(\bigoplus\limits_{\alpha\in R^{+}}\mathbb{C}_{\alpha})
\oplus \mathbb{C}_{h(\alpha_{2})}\oplus \mathbb{C}_{-\alpha_{2}} \oplus \mathbb{C}_{h(\alpha_{1})}\oplus \mathbb{C}_{-\alpha_{1}}\oplus (\bigoplus\limits_{i=1}^{3}\mathbb{C}_{-(\alpha_{2} + i\alpha_{1})})\oplus \mathbb{C}_{-(3\alpha_{1}+2\alpha_{2})}.$ 

Thus, we have $H^{0}(w, \mathfrak{g}/\mathfrak{b})=H^{0}(s_{1}, \mathfrak{g})=\mathfrak{g}.$

\end{proof}

\begin{exmp}
Let $G=PSL(3,\mathbb{C}).$ In this case, $B$ is the set of invertible lower triangular matrices, $P_{\alpha_{1}}=Aut^0(X(s_{1}s_{2}))$ and $X(s_{1}s_{2})$ is smooth.
\end{exmp}
\begin{rem}
In Theorem \ref{thm 2.1}, for a given parabolic subgroup $P$ of $G$ containing $B$ properly, the Schubert variety $X(w)$  for which $P=Aut^{0}(X(w))$ is not necessarily smooth. For example, take $G=PSL(4, \mathbb{C}),$ and $P_{\alpha_{2}}=Aut^{0}(X(s_{2}s_{1}s_{3}s_{2})).$ Note that $X(s_{2}s_{1}s_{3}s_{2})$ is not smooth (see \cite[Theorem 2.2, p.48]{LS}). 
\end{rem}

\section{automorphism groups of schubert varieties in partial flag varieties of type $A_{n}$}

In this section we discuss about parabolic subgroups of $G=PSL(n+1, \mathbb{C})$ and connected component, containing identity element of the group of all algebraic automorphisms of Schubert varieties in the Grassmannian $G/P_{\hat{\alpha}_{r}},$ where $1\le r\le n$ and $P_{\hat{\alpha}_{r}}=P_{S\setminus\{\alpha_{r}\}}.$  

\begin{lem}\label{lem 4.1}
Let $G=PSL(n+1, \mathbb{C}).$ Let $1\le r\le n$ and  $w\in W^{S\setminus \{\alpha_{r}\}}.$ Then
$w^{-1}(\alpha_{0})<0$ if and only if there exists an increasing sequence $1\le a_{1}<a_{2}<\dots <a_{r}=n$ of positive integers such that $w=(s_{a_{1}}\cdots s_{1})(s_{a_{2}}\cdots s_{2})\cdots (s_{a_{r}}\cdots s_{r}).$
\end{lem}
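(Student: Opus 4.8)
The plan is to translate everything into the symmetric group $W\cong S_{n+1}$ acting on $\{1,\dots ,n+1\}$, using the standard coordinates in which $\alpha_i=\epsilon_i-\epsilon_{i+1}$ and $\alpha_0=\sum_{i=1}^n\alpha_i=\epsilon_1-\epsilon_{n+1}$, and then to match two combinatorial descriptions of the elements in question. Throughout, $s_j$ denotes $s_{\alpha_j}$, the transposition $(j,j+1)$.

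First I would reduce the condition $w^{-1}(\alpha_0)<0$ to a transparent combinatorial one. Since $w^{-1}(\alpha_0)=\epsilon_{w^{-1}(1)}-\epsilon_{w^{-1}(n+1)}$, we have $w^{-1}(\alpha_0)<0$ if and only if $w^{-1}(1)>w^{-1}(n+1)$. For $w\in W^{S\setminus\{\alpha_r\}}$ the one-line notation is increasing on $\{1,\dots ,r\}$ and on $\{r+1,\dots ,n+1\}$, so $w$ is determined by the $r$-subset $I:=\{w(1)<\cdots <w(r)\}=\{b_1<\cdots <b_r\}$. A short case analysis on whether $1\in I$ and whether $n+1\in I$ (recording the four resulting positions of the values $1$ and $n+1$) shows that $w^{-1}(1)>w^{-1}(n+1)$ holds precisely when $1\notin I$ and $n+1\in I$, that is, $b_1\ge 2$ and $b_r=n+1$.

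The key step is to identify the staircase products with these Grassmannian permutations. Setting $u_k:=s_{a_k}s_{a_k-1}\cdots s_k$, a direct computation shows $u_k$ is the cycle sending $k\mapsto a_k+1$ and $j\mapsto j-1$ for $k+1\le j\le a_k+1$ (fixing everything else). I would then prove by induction on $r$ the core claim: if $1\le a_1<a_2<\cdots <a_r\le n$ then $w:=u_1u_2\cdots u_r$ lies in $W^{S\setminus\{\alpha_r\}}$ and satisfies $w(i)=a_i+1$ for $1\le i\le r$. Writing $w=w'u_r$ with $w'=u_1\cdots u_{r-1}$, the inductive hypothesis gives $w'(i)=a_i+1$ for $i<r$ and $w'$ increasing on $\{r,\dots ,n+1\}$; the crucial point is that $a_r+1$ is a \emph{fixed point} of $w'$, because among the values $\le a_r+1$ exactly the $r-1$ values $a_1+1,\dots ,a_{r-1}+1$ have been moved to the first block, which places $a_r+1$ at position $a_r+1$ in $w'$'s increasing second block. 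Hence $w(r)=w'(u_r(r))=w'(a_r+1)=a_r+1$, and one then checks that $w$ is increasing on $\{r+1,\dots ,n+1\}$ across the junction at $a_r+1$. I expect this fixed-point verification, together with the index bookkeeping at that junction, to be the main technical obstacle; everything else is formal.

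Finally I would assemble the two directions. Under the correspondence $b_i\leftrightarrow a_i:=b_i-1$, the condition "$b_1\ge 2$ and $b_r=n+1$" becomes exactly "$1\le a_1<\cdots <a_r=n$", and the core claim provides a bijection between such sequences and the $w\in W^{S\setminus\{\alpha_r\}}$ with image-set $I=\{a_i+1\}$. Thus, if $w$ has the stated staircase form its subset satisfies $b_1\ge 2$ and $b_r=n+1$, so $w^{-1}(\alpha_0)<0$; conversely, if $w^{-1}(\alpha_0)<0$ then $b_1\ge 2$ and $b_r=n+1$, and with $a_i=b_i-1$ the core claim identifies $w$ with $\prod_k(s_{a_k}\cdots s_k)$. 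As a by-product, comparing $l(w)=\sum_i(b_i-i)$ with the total number $\sum_k(a_k-k+1)$ of simple reflections in the product shows the expression is reduced, although this is not needed for the equivalence.
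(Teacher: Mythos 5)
Your proposal is correct and follows essentially the same route as the paper: the paper's proof quotes the staircase normal form $w=(s_{a_i}\cdots s_i)\cdots(s_{a_r}\cdots s_r)$ for nonidentity elements of $W^{S\setminus\{\alpha_r\}}$ and observes that $w^{-1}(\alpha_0)<0$ exactly when $i=1$ and $a_r=n$, which is precisely your equivalence ``$1\notin I$ and $n+1\in I$'' under $b_i=a_i+1$. The only difference is one of detail: you prove the normal form and the highest-root criterion from scratch in one-line notation (both verifiably correct, including the fixed-point step $w'(a_r+1)=a_r+1$), where the paper cites them as standard and ``easy to see.''
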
 
\begin{proof}
Note that $\alpha_{0}=\alpha_{1}+\alpha_{2}+\dots+\alpha_{n}.$
Let  $w\in W^{S\setminus \{\alpha_{r}\}}$ be such that $w\neq id.$ Then there exists an integer $1\le i\le r$ and  an increasing sequence of positive integers $i\le a_{i}<a_{i+1}<\dots <a_{r}\le n$ such that $w=(s_{a_{i}}\cdots s_{i})(s_{a_{i+1}}\cdots s_{i+1})\cdots (s_{a_{r}}\cdots s_{r}).$
Now, it is easy to see that $w^{-1}(\alpha_{0})<0$ if and only if  $i=1$ and $a_{r}=n.$
\end{proof}

Let $W(r)=\{w\in W^{S\setminus \{\alpha_{r}\}}: w=(s_{a_{1}}\cdots s_{1})(s_{a_{2}}\cdots s_{2})\cdots (s_{a_{r}}\cdots s_{r}),$ where $1\le a_{1}<a_{2}<\dots <a_{r}=n\}.$ For $w\in W^{S\setminus \{\alpha_{r}\}},$ we denote the Schubert variety in the Grassmannian $G/P_{\hat{\alpha}_{r}}$ corresponding to $w$ by $X_{P_{\hat {\alpha}_{r}}}(w).$ 
\begin{prop}\label{prop 4.2}
Let $w=(s_{a_{1}}\cdots s_{1})(s_{a_{2}}\cdots s_{2})\cdots (s_{a_{r}}\cdots s_{r})\in W(r).$ Let $J'(w):=\{i\in \{1,2,\dots,r-1\}: a_{i+1}-a_{i}\ge 2\}$, $J''(w)=\{1+a_{i}: i\in J'(w)\}$ and $J(w)=\{\alpha_{j}: j\in \{1, \ldots ,n\}\setminus J''(w)\}.$
Then we have $P_{J(w)}=Aut^0(X_{P_{\hat {\alpha}_{r}}}(w)).$
\end{prop}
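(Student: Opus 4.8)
The plan is to run the same three-step strategy as in Proposition~\ref{BCG}: show that $P_{J(w)}$ is the stabiliser of the Schubert variety, compute the global vector fields so as to realise its automorphism group inside $G$, and then pin it down by the stabiliser. Write $P=P_{\hat\alpha_r}$, $\mathfrak p=Lie(P)$, and $Y=X_{P_{\hat{\alpha}_{r}}}(w)$. Since $w\in W(r)\subseteq W^{S\setminus\{\alpha_r\}}$, Lemma~\ref{lem 4.1} gives $w^{-1}(\alpha_0)<0$, and this is the input that drives both the injectivity and the cohomology computation below.

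First I would check that $\mathrm{Stab}_G(Y)=P_{J(w)}$. For a simple root $\alpha_j$ this reduces to deciding whether left multiplication by $s_{\alpha_j}$ preserves the set $\{v\in W^{S\setminus\{\alpha_r\}}:v\le w\}$; reading this off the reduced word $w=(s_{a_1}\cdots s_1)\cdots(s_{a_r}\cdots s_r)$ shows the failures occur exactly at the indices $1+a_i$ with $a_{i+1}-a_i\ge 2$, so that $\mathrm{Stab}_G(Y)=P_{J(w)}$ and hence $Lie(P_{J(w)})=\{\xi\in\mathfrak g:\xi\text{ is tangent to }Y\}$. Next, because $w\in W^{S\setminus\{\alpha_r\}}$, the morphism $\pi\colon X(w)\to Y$ induced by $G/B\to G/P$ is birational with $\pi_*\mathcal{O}_{X(w)}=\mathcal{O}_Y$ (\cite[Theorem~3.3.4(a)]{BK}, \cite[Lemma~3.3.3(b)]{BK}). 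Since $T_{G/P}=\mathcal{L}(\mathfrak g/\mathfrak p)$ and $\pi^*\mathcal{L}(\mathfrak g/\mathfrak p)=\mathcal{L}(\mathfrak g/\mathfrak p)$ on $X(w)$ (pullback of the associated bundle is the associated bundle of the restricted module), the projection formula yields $H^0(Y,T_{G/P}|_Y)=H^0(w,\mathfrak g/\mathfrak p)$.

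The crux is then the identity $H^0(w,\mathfrak g/\mathfrak p)=\mathfrak g$. Here $\mathfrak g/\mathfrak p=\bigoplus_\alpha\mathbb{C}_\alpha$, summed over the positive roots $\alpha$ in which $\alpha_r$ occurs with coefficient one, and I would evaluate the cohomology by the Demazure recursion (Lemma~\ref{lem 2.1}), feeding it through the $\hat L_\alpha$-module description (Lemma~\ref{lem 3.1}) and the indecomposable decomposition (Lemma~\ref{lem 3.2}), exactly as in the three cases of Proposition~\ref{BCG}. The computation is organised as a descending induction over the factors $s_{a_i}\cdots s_i$, checking that every weight space of $H^0(w,\mathfrak g/\mathfrak p)$ is one-dimensional and that each negative root together with each fundamental coweight is produced exactly once; the hypothesis $w^{-1}(\alpha_0)<0$ is what forces $-\alpha_0$, and inductively all of $\mathfrak g$, to appear. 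This bookkeeping, carried out uniformly over the whole family $W(r)$ rather than in three isolated cases, is the main obstacle: one must track how the gaps $a_{i+1}-a_i\ge 2$ (which delete the simple roots indexed by $J''(w)$ from the stabiliser) interact with the generation of the negative root spaces across all $\sum_{i}(a_i-i+1)$ reflections, keeping the identification of indecomposable $B_\alpha$-summands correct at each step.

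Granting $H^0(w,\mathfrak g/\mathfrak p)=\mathfrak g$, the conclusion follows as in Proposition~\ref{BCG}. By \cite[Lemma~3.4, p.13]{MO} we have $Lie(Aut^0(Y))=H^0(Y,T_Y)\subseteq H^0(Y,T_{G/P}|_Y)=\mathfrak g$, so $Aut^0(Y)$ is a closed connected subgroup of $G$; as it preserves $Y$ it lies in $\mathrm{Stab}_G(Y)=P_{J(w)}$. Conversely the action of $P_{J(w)}$ gives a homomorphism $P_{J(w)}\to Aut^0(Y)$. Its differential is injective: the $G$-action realises $\mathfrak g$ inside $H^0(Y,T_{G/P}|_Y)$, and since this space equals $\mathfrak g$ the realisation is bijective, so no nonzero $\xi\in\mathfrak g$ restricts to the zero vector field on $Y$; as $G$ is adjoint the homomorphism itself is then injective, yielding the reverse inclusion. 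Hence $P_{J(w)}=Aut^0(Y)=Aut^0(X_{P_{\hat{\alpha}_{r}}}(w))$.
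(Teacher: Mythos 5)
Your scaffolding is the same as the paper's: identify the stabiliser as $P_{J(w)}$ from the reduced word, get injectivity of $P_{J(w)}\to Aut^0(X_{P_{\hat{\alpha}_{r}}}(w))$ from $w^{-1}(\alpha_{0})<0$ via Lemma \ref{lem 4.1}, establish $H^{0}(w,\mathfrak{g}/\mathfrak{p}_{\hat{\alpha}_{r}})=\mathfrak{g}$ so that $Aut^0$ sits inside $G$, and close the sandwich with the stabiliser. But the crux of the matter, the identity $H^{0}(w,\mathfrak{g}/\mathfrak{p}_{\hat{\alpha}_{r}})=\mathfrak{g}$, is exactly what you do not prove: you propose to redo the Demazure recursion of Lemmas \ref{lem 2.1}--\ref{lem 3.2} ``uniformly over the whole family $W(r)$'' and you yourself flag this bookkeeping as ``the main obstacle.'' That is a genuine gap, not a detail: the computation in Proposition \ref{BCG} works because it treats three isolated elements $w$ in specific non-simply-laced types, and carrying it out for every $w\in W(r)$, with arbitrary gaps $a_{i+1}-a_{i}$, across $\sum_{i}(a_{i}-i+1)$ reflections, is an open-ended induction you have only gestured at. The missing idea is the one the paper actually uses to bypass all of this: since $G=PSL(n+1,\mathbb{C})$ is simply laced, \cite[Lemma 3.5(3), p.770]{Ka} says the restriction map $H^{0}(w_{0,r},\mathfrak{g}/\mathfrak{p}_{\hat{\alpha}_{r}})\longrightarrow H^{0}(w,\mathfrak{g}/\mathfrak{p}_{\hat{\alpha}_{r}})$ is surjective, where $w_{0,r}$ is the minimal representative of $w_{0}$; as $H^{0}(w_{0,r},\mathfrak{g}/\mathfrak{p}_{\hat{\alpha}_{r}})=\mathfrak{g}$ and $w^{-1}(\alpha_{0})<0$ forces injectivity, the map is an isomorphism and the crux follows in two lines, with no weight-space bookkeeping at all.

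A secondary point: your final injectivity argument is circular as stated. You argue that no nonzero $\xi\in\mathfrak{g}$ restricts to the zero vector field because ``the realisation is bijective,'' but the bijectivity of $\mathfrak{g}\to H^{0}(w,\mathfrak{g}/\mathfrak{p}_{\hat{\alpha}_{r}})$ is precisely what is at stake; a character computation via the Demazure recursion would give you the dimensions of the weight spaces of $H^{0}$, not injectivity of the restriction map. The standard repair (and what underlies the paper's one-line claim) is that the kernel of restriction is a $B$-submodule of $\mathfrak{g}$, hence if nonzero contains the highest root space $\mathfrak{g}_{\alpha_{0}}$, which cannot vanish on $X_{P_{\hat{\alpha}_{r}}}(w)$ when $w^{-1}(\alpha_{0})<0$; the paper invokes this condition directly, as in \cite[Theorem 4.2(2), p.772]{Ka}, for the injectivity of $\psi_{w}$. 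Your stabiliser computation and the identification $H^{0}(Y,T_{G/P}|_{Y})=H^{0}(w,\mathfrak{g}/\mathfrak{p}_{\hat{\alpha}_{r}})$ via $\pi_{*}\mathcal{O}_{X(w)}=\mathcal{O}_{Y}$ are fine and consistent with the paper.
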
 
\begin{proof}Let $P_{w}$ be the stabiliser of $X_{P_{\hat {\alpha}_{r}}}(w)$ in $G.$
First we show that $P_{w}=P_{J(w)}.$ 
If $a_{i+1}-a_{i}\ge 2$ for some $1\le i\le r-1$ then  $s_{1+a_{i}}w>w,$ and  $s_{1+a_{i}}w\in  W^{S\setminus \{\alpha_{r}\}}.$
Hence $s_{1+a_{i}}$ is not in the Weyl group of $P_{w}.$ 
Therefore $P_{w}$ is a subgroup of  $P_{J(w)}.$
Let $R(P_{\hat {\alpha}_{r}})=R\cap(\sum\limits_{\alpha \in S\setminus \{\alpha_{r}\}}\mathbb{Z}\alpha).$
Further, it is easy to see that for $\alpha \in J(w)$  either we have $w^{-1}(\alpha)<0$ or  $w^{-1}(\alpha)\in R(P_{\hat {\alpha}_{r}}).$ Therefore $P_{J(w)}\subseteq P_{w}$.

Let $\psi_{w}: P_{J(w)}\longrightarrow Aut^0(X_{P_{\hat{\alpha}_{r}}}(w))$ be the natural homomorphism induced by action of $P_{J(w)}$ on $X_{P_{\hat{\alpha}_{r}}}(w).$ 

Since $w\in W(r),$  $w^{-1}(\alpha_{0})<0$ (see Lemma \ref{lem 4.1}). Therefore $\psi_{w}: P_{J(w)}\longrightarrow Aut^0(X_{P_{\hat{\alpha}_{r}}}(w))$ is injective.

Let $\mathfrak{p}_{{\hat{\alpha}}_{r}}$ be the Lie algebra of $P_{\hat {\alpha}_{r}}.$ Since $G$ is simply laced, the restriction map \\
$H^0(w_{0,r}, \mathfrak{g/p_{{\hat{\alpha}}_{r}}}) \longrightarrow H^0(w, \mathfrak{g/p_{{\hat{\alpha}}_{r}}})$ is surjective, where $w_{0,r}\in W^{S\setminus\{\alpha_{r}\}}$ is the minimal representative of $w_{0}$ (see \cite[Lemma 3.5(3), p.770]{Ka}).

Further, since $w^{-1}(\alpha_{0})<0,$ $H^0(w_{0,r} , \mathfrak{g/p_{{\hat{\alpha}}_{r}}})=\mathfrak{g}\longrightarrow H^0(w, \mathfrak{g/p_{{\hat{\alpha}}_{r}}})$ is an isomorphism.

Therefore we have $H^0(X_{P_{\hat{\alpha}_{r}}}(w), T_{X_{P_{\hat{\alpha}_{r}}}(w)})\subseteq \mathfrak{g}.$ Hence $Aut^0(X_{P_{\hat{\alpha}_{r}}}(w))$ is  a closed subgroup of $G$ containing $P_{J(w)}.$ Thus we have  $P_{J(w)}=Aut^0(X_{P_{\hat{\alpha}_{r}}}(w)).$  
\end{proof}

\begin{cor}
	Let $B\subsetneq P$ be a parabolic subgroup of $G$ and $w\in W^{S\setminus \{\alpha_{r}\}}$  such that $P=Aut^0(X_{P_{\hat{\alpha}_{r}}}(w)).$ Then we have  $P=P_{J(w)}.$
\end{cor}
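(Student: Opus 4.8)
The plan is to deduce the statement from Proposition~\ref{prop 4.2}. That proposition computes $Aut^0(X_{P_{\hat{\alpha}_{r}}}(w))$ as $P_{J(w)}$ under the standing assumption $w\in W(r)$, which by Lemma~\ref{lem 4.1} is the same as $w^{-1}(\alpha_{0})<0$; moreover $J(w)$ is only defined once $w\in W(r)$. Hence the entire content of the corollary is the assertion that the hypothesis $B\subsetneq P=Aut^0(X_{P_{\hat{\alpha}_{r}}}(w))$ forces $w\in W(r)$. Once that is established, Proposition~\ref{prop 4.2} gives $P=Aut^0(X_{P_{\hat{\alpha}_{r}}}(w))=P_{J(w)}$ with no further work. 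As a preliminary reduction, note that if $w=id$ then $X_{P_{\hat{\alpha}_{r}}}(w)$ is a point and $Aut^0$ is trivial, contradicting $B\subsetneq P$; so $w\neq id$, and by Lemma~\ref{lem 4.1} it remains only to prove $w^{-1}(\alpha_{0})<0$.

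The mechanism I would use is a faithfulness criterion for the natural action. Since $P=Aut^0(X_{P_{\hat{\alpha}_{r}}}(w))$ is, by definition, a group acting faithfully on $X_{P_{\hat{\alpha}_{r}}}(w)$, and the subgroup $B\subseteq P\subseteq G$ acts through the restriction of the natural $G$-action on $G/P_{\hat{\alpha}_{r}}$ (this is the sense in which $P=Aut^0$ is realised, as in Proposition~\ref{prop 4.2}), the natural action of $B$ on $X_{P_{\hat{\alpha}_{r}}}(w)$ is faithful; equivalently, the natural homomorphism from the stabiliser $P_{w}$ to $Aut^0(X_{P_{\hat{\alpha}_{r}}}(w))$ is injective on $B$. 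I would then invoke the converse of the injectivity statement used inside the proof of Proposition~\ref{prop 4.2}: there $w^{-1}(\alpha_{0})<0$ was used to conclude that $\psi_{w}$ is injective, whereas what is needed here is the complementary implication that $w^{-1}(\alpha_{0})>0$ forces the $B$-action to have nontrivial ineffective kernel. This is the Grassmannian analogue of the ``only if'' direction of \cite[Theorem~4.2(2), p.772]{Ka}. Since $w^{-1}(\alpha_{0})$ is a nonzero root, the only alternative to $w^{-1}(\alpha_{0})<0$ is $w^{-1}(\alpha_{0})>0$, which this implication rules out.

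Concretely, to prove that implication I would show that when $w^{-1}(\alpha_{0})>0$ the ineffective kernel of the $B$-action on $X_{P_{\hat{\alpha}_{r}}}(w)$ is nontrivial: there is a nonempty set of negative roots $\beta$ whose root subgroups $U_{\beta}\subseteq B$ fix the base point $wP_{\hat{\alpha}_{r}}$ and stabilise the Schubert cell $BwP_{\hat{\alpha}_{r}}/P_{\hat{\alpha}_{r}}$ pointwise, hence act trivially on its closure $X_{P_{\hat{\alpha}_{r}}}(w)$. Identifying these roots by the same kind of root-theoretic bookkeeping as in Lemma~\ref{lem 4.1} is the step I expect to be the main obstacle; one must also check that the argument is uniform in the two ways $w$ can fail to lie in $W(r)$, namely $a_{r}<n$ (the product does not reach $s_{n}$) and the product not starting at $s_{1}$. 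The case $G=PSL(n+1,\mathbb{C})$ acting on $\mathbb{P}^{n}$, where $X_{P_{\hat{\alpha}_{1}}}(s_{i}\cdots s_{1})=\mathbb{P}^{i}$ has a nontrivial ineffective $B$-kernel for every $i<n$ while $\mathbb{P}^{n}$ does not, is exactly the prototype of this phenomenon and a useful check. Granting this, $w^{-1}(\alpha_{0})<0$, so $w\in W(r)$ by Lemma~\ref{lem 4.1}, and Proposition~\ref{prop 4.2} finishes the proof by giving $P=P_{J(w)}$.
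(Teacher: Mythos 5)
Your reading of the statement is the right one, and it is worth noting that the paper gives no separate proof of this corollary at all: it is stated as an immediate consequence of Proposition \ref{prop 4.2}, with exactly the point you isolate --- that the hypothesis forces $w\in W(r)$ --- left implicit. Your reduction is correct: $J(w)$ is only defined for $w\in W(r)$; the case $w=id$ is excluded since then $X_{P_{\hat{\alpha}_{r}}}(w)$ is a point and $Aut^{0}$ is trivial; and by Lemma \ref{lem 4.1} everything comes down to ruling out $w^{-1}(\alpha_{0})>0$ (note $w^{-1}(\alpha_{0})$ is a root, so it is nonzero). Your mechanism --- faithfulness of the natural $B$-action, i.e.\ injectivity of the natural map onto $Aut^{0}$, which is the sense in which the equality $P=Aut^{0}(X_{P_{\hat{\alpha}_{r}}}(w))$ is meant throughout the paper --- is also the intended one.

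The one place where you stop short, flagging the identification of the kernel roots as ``the main obstacle,'' is in fact no obstacle, and you should close it as follows: the single root $\beta=-\alpha_{0}$ suffices, with no root-theoretic bookkeeping and no case analysis according to how $w$ fails to lie in $W(r)$. Since $\alpha_{0}$ is the highest root, $U_{-\alpha_{0}}$ is normalized by $B$; so for $b\in B$ and $u\in U_{-\alpha_{0}}$ one has $ubwP_{\hat{\alpha}_{r}}=bu'wP_{\hat{\alpha}_{r}}=bw\left(w^{-1}u'w\right)P_{\hat{\alpha}_{r}}$ with $u'\in U_{-\alpha_{0}}$, and $w^{-1}U_{-\alpha_{0}}w=U_{-w^{-1}(\alpha_{0})}\subseteq P_{\hat{\alpha}_{r}}$ whenever $w^{-1}(\alpha_{0})>0$, because $P_{\hat{\alpha}_{r}}\supseteq B$ contains every negative root subgroup. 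Hence $U_{-\alpha_{0}}$ fixes the dense cell $BwP_{\hat{\alpha}_{r}}/P_{\hat{\alpha}_{r}}$ pointwise, hence acts trivially on its closure $X_{P_{\hat{\alpha}_{r}}}(w)$, so the natural homomorphism from $P\supseteq B\supseteq U_{-\alpha_{0}}$ to $Aut^{0}(X_{P_{\hat{\alpha}_{r}}}(w))$ has nontrivial kernel, contradicting the hypothesis. This handles both failure modes you worry about ($a_{r}<n$, and the word not beginning with the string ending in $s_{1}$) uniformly, since Lemma \ref{lem 4.1} already converts each of them into the single condition $w^{-1}(\alpha_{0})>0$; your $\mathbb{P}^{i}\subset\mathbb{P}^{n}$ sanity check is exactly the situation of Corollary \ref{cor 6.4}. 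With this two-line insertion your argument is complete and coincides with the derivation the paper intends.
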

\begin{cor}\label{cor 6.4}
\begin{enumerate}
		
\item [(1)]

If $P\neq G,$ then there is no element $w\in W^{S\setminus \{\alpha_{1}\}}$
such that $P=Aut^0(X_{P_{\hat{\alpha}_{1}}}(w)).$ 

\item[(2)] 

If $P\neq G,$ then there is no element $w\in W^{S\setminus \{\alpha_{n}\}}$
such that $P=Aut^0(X_{P_{\hat{\alpha}_{n}}}(w)).$ 

\end{enumerate}
\begin{proof}
Proof of (1): The Schubert varieties in $G/P_{\hat{\alpha_{1}}}$ are projective space $\mathbb{P}^i$ ($0\le i \le n$). Therefore the automorphism groups of these Schubert varieties are $PSL(i+1, \mathbb{C})$ $(0\le i\le n)$. Further, the map $\phi_{w}$ is injective for only one $w.$ 

Proof of (2): Proof of (2) is similar to that of (1).

\end{proof}

\end{cor}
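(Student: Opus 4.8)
The plan is to reduce part (1) to the elementary geometry of projective space. Since $G=PSL(n+1,\mathbb{C})$ and $P_{\hat{\alpha}_{1}}$ is the maximal parabolic omitting $\alpha_{1}$, the Grassmannian $G/P_{\hat{\alpha}_{1}}$ is canonically $\mathbb{P}^{n}$. First I would record that, as $w$ ranges over $W^{S\setminus \{\alpha_{1}\}}$, the Schubert varieties $X_{P_{\hat{\alpha}_{1}}}(w)$ are exactly the linear subspaces $\mathbb{P}^{i}$ for $0\le i\le n$, indexed by the length $i=l(w)$; concretely, the nonidentity elements are $w=s_{i}s_{i-1}\cdots s_{1}$. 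Consequently $Aut^{0}(X_{P_{\hat{\alpha}_{1}}}(w))=Aut^{0}(\mathbb{P}^{i})=PSL(i+1,\mathbb{C})$.

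Next I would argue that a proper parabolic subgroup of $G$ can never coincide with such an automorphism group, splitting on the value of $i$. Suppose $P=Aut^{0}(X_{P_{\hat{\alpha}_{1}}}(w))=PSL(i+1,\mathbb{C})$ for a parabolic $P\subseteq G$ with $P\neq G$. If $i=n$, then $PSL(i+1,\mathbb{C})=G$, forcing $P=G$, contrary to hypothesis. If $i<n$, then $PSL(i+1,\mathbb{C})$ is semisimple and hence reductive, whereas every proper parabolic $P\subsetneq G$ has a nontrivial unipotent radical and is therefore not reductive; the two groups cannot be equal. Either way we reach a contradiction, proving (1). Equivalently, one may phrase this through the natural homomorphism $\phi_{w}\colon P_{w}\longrightarrow Aut^{0}(X_{P_{\hat{\alpha}_{1}}}(w))$: the equality $P=Aut^{0}(X_{P_{\hat{\alpha}_{1}}}(w))$ forces $\phi_{w}$ to be injective, and by Lemma \ref{lem 4.1} the condition $w^{-1}(\alpha_{0})<0$ that secures injectivity singles out the unique element of $W(1)$, namely $w=s_{n}\cdots s_{1}$ with $X_{P_{\hat{\alpha}_{1}}}(w)=\mathbb{P}^{n}$ and $Aut^{0}=G$; since $P\neq G$, no admissible $w$ survives.

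Part (2) is symmetric: $P_{\hat{\alpha}_{n}}$ omits the opposite-end simple root, $G/P_{\hat{\alpha}_{n}}$ is again $\mathbb{P}^{n}$ (the dual projective space), its Schubert varieties are once more the linear $\mathbb{P}^{i}$, and the identical dichotomy on $i$ applies verbatim. The one delicate point, and the main thing to pin down carefully, is the opening step: verifying that the Schubert varieties in $G/P_{\hat{\alpha}_{1}}$ are precisely the linear subspaces $\mathbb{P}^{i}$ with automorphism group $PSL(i+1,\mathbb{C})$, and making the identification ``$P=Aut^{0}$'' precise enough, via faithfulness of the $B$-action, equivalently injectivity of $\phi_{w}$, that comparing the intrinsic group structure of $PSL(i+1,\mathbb{C})$ with that of a parabolic subgroup of $G$ is legitimate.
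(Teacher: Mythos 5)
Your proof is correct and matches the paper's own argument: both identify the Schubert varieties in $G/P_{\hat{\alpha}_{1}}\cong\mathbb{P}^{n}$ with the linear subspaces $\mathbb{P}^{i}$, so that $Aut^{0}(X_{P_{\hat{\alpha}_{1}}}(w))=PSL(i+1,\mathbb{C})$, and both observe (via Lemma \ref{lem 4.1}) that $\phi_{w}$ is injective only for the single element $w=s_{n}\cdots s_{1}$ of $W(1)$, whose Schubert variety is all of $\mathbb{P}^{n}$ with $Aut^{0}=G$, which is excluded by $P\neq G$. Your added dichotomy --- $PSL(i+1,\mathbb{C})$ is reductive for $i<n$, while a proper parabolic subgroup has nontrivial unipotent radical, so the two cannot even be abstractly isomorphic --- is a small but worthwhile supplement to the paper's terser proof, as it rules out the equality independently of whether ``$P=Aut^{0}$'' is read through the action map $\phi_{w}$ or as an isomorphism of algebraic groups.
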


{\bf Acknowledgements} We are grateful to the Infosys Foundation for the partial financial support. We are grateful for the referee for suggesting the reference of the book D. N. Akhiezer, Lie group actions in complex analysis, which helps to improve the exposition of this article.

\end{document}